\newcommand{\fm}{{\mathfrak m}}
\newcommand{\R}{{\mathbb R}}
\newcommand{\C}{\mathbb{C}}
\newcommand{\rx}{\mathbb{R}[x_1,\ldots,x_n]}
\newtheorem{theorem}{Theorem}[section]
\newtheorem{lemma}[theorem]{Lemma}
\newtheorem{proposition}[theorem]{Proposition}
\newtheorem{corollary}[theorem]{Corollary}
\theoremstyle{definition}
\newtheorem{definition}[theorem]{Definition}
\theoremstyle{remark}
\newtheorem{remark}[theorem]{Remark}
\newtheorem{example}[theorem]{Example}
\DeclareMathOperator{\Hom}{Hom}
\DeclareMathOperator{\conv}{conv}
\begin{document}
\title{Strong Nonnegativity and Sums of Squares On Real Varieties}

\author{Mohamed Omar}
\author{Brian Osserman}

\begin{abstract}Motivated by scheme theory, we introduce strong nonnegativity 
on real varieties, which has the property that a sum of squares is strongly
nonnegative.
We show that this algebraic property is equivalent to nonnegativity for
nonsingular real varieties.  Moreover, for singular varieties, we reprove 
and generalize obstructions of Gouveia and Netzer to 
the convergence of the theta body hierarchy of convex bodies approximating 
the convex hull of a real variety.
\end{abstract}

\thanks{The first author was partially supported by NSERC PGS-D 281174 and NSF grant DMS-0914107.}
\maketitle

\section{Introduction}
The relationship between nonnegative polynomials and sums of squares of
polynomials on real varieties is a classical subject, dating back to Hilbert.
In real algebraic geometry, a large body of research is dedicated to
understanding the gap between these families.  At the same time, this
subject has recently become important in the emerging field of convex
algebraic geometry, where it is relevant to the effectiveness of computing
convex hulls of algebraic varieties.  This in turn has been intimately related
to the geometry of feasible regions of semidefinite programs (see 
\cite{GouveiaThomas2012} and references therein).  Motivated by this and inspired
by scheme theory, we introduce an intermediate class of polynomials
which we call \emph{strongly nonnegative}.  This class is particularly useful
for understanding the role that singularities on real varieties play in
obstructing sums of squares representations.

We begin by exploring the basic properties of strong nonnegativity, showing
in particular in Theorem \ref{thm:nonneg-nbhd} 
that strong nonnegativity at a point implies nonnegativity in a neighborhood 
of that point, and that the converse holds for nonsingular points.
In the singular case, we
study obstructions to the theta body hierarchy \cite{GouveiaParriloThomas2008}
of convex bodies approximating the convex hull of a real variety.  The strength
of this approximation is governed by the sums of squares representability of
linear functions on a variety.  We are able to recover very transparently in
Theorem \ref{thm:not-theta-exact} the obstructions produced by Gouveia and
Netzer in \cite{GouveiaNetzer} to convergence of this hierarchy. The same
argument gives us Corollary \ref{cor:thetaobstruct}, a generalized version of
their obstruction. Finally, Proposition \ref{prop:strictsos} shows that our
construction behaves well in the context of the foundational constructions of
Gouveia, Parrilo and Thomas in \cite{GouveiaParriloThomas2008}.

\subsection*{Acknowledgements}

We would like to thank Rekha Thomas and Jo\~ao Gouveia for helpful 
conversations. In particular, Examples \ref{ex:squiggly} and \ref{ex:sphere} 
were found in consultation with them. We would also like to thank the referee 
for helpful comments.

\section{Strong nonnegativity}

Our convention throughout, given an ideal $I \subseteq \rx$, is to use
$V_{\R}(I)$ for the real vanishing set of $I$, and use $V(I)$ in relation to 
concepts depending on the ring $\rx/I$, which we will denote by $A$.  
Formally, $V(I)$ is the closed subscheme $\mbox{Spec}(A)\subseteq \R^n$, but 
our definitions will be in terms of $A$, so no knowledge of schemes is
required. All of our ring homomorphisms are assumed to be $\R$-algebra
homomorphisms.

We begin by introducing our stricter definition of nonnegativity. Our 
motivating example is the following:

\begin{example}\label{ex:xsquared}
\begin{figure}
\def\JPicScale{0.8}
\ifx\JPicScale\undefined\def\JPicScale{1}\fi
\unitlength \JPicScale mm
\begin{picture}(160,16)(0,0)
\linethickness{0.1mm}
\put(0,6){\line(1,0){160}}
\put(80,2){\makebox(0,0)[cc]{$0$}}

\put(80,6){\makebox(0,0)[cc]{}}

\linethickness{0.3mm}
\put(60,6){\line(1,0){40}}
\put(100,6){\vector(1,0){0.12}}
\put(60,6){\vector(-1,0){0.12}}
\put(80,16){\makebox(0,0)[cc]{$V(x^2)$}}

\linethickness{0.3mm}
\multiput(60,7)(1.9,0){11}{\line(1,0){0.95}}
\put(60,7){\vector(-1,0){0.12}}
\put(61,10){\makebox(0,0)[cc]{$-\epsilon$}}



\end{picture}
\caption{Motivating example}
\end{figure}
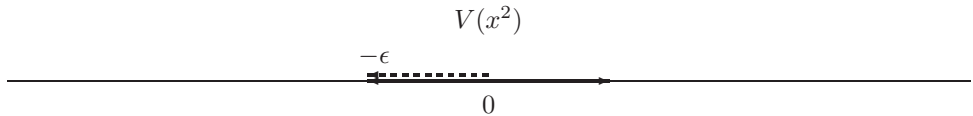

Suppose $I \subseteq \R[x]$ is the ideal
generated by $x^2$. Then set-theoretically, we have $V_{\R}(I)$ equal to the
origin. Thus, the function $x$ is nonnegative on $V_{\R}(I)$. However, one 
easily checks that $x$ is not a sum of squares modulo $I$. 

From a more scheme-theoretic perspective, we should think of $V(I)$ as
not consisting only of the origin, but also including an infinitesimal
thickening in both directions -- in particular, in the negative direction.
Thus, we should not think of $x$ as being nonnegative on the \textit{scheme} 
$V(I)$.
\end{example}

Recall that if $I \subseteq \rx$ is an ideal, then the points of
$V_{\R}(I)$ correspond precisely to ($\R$-algebra) homomorphisms $A \to \R$,
where $A=\rx/I$. The homomorphism obtained from a given $P \in V_{\R}(I)$
is simply given by evaluating polynomials at $P$.
Thus, one may rephrase nonnegativity as saying that $f$ is nonnegative
if its image under any homomorphism $A \to \R$ is nonnegative. Our
definition will consider a broader collection of such homomorphisms.
In particular, given a point of $V_{\R}(I)$ corresponding to 
$\varphi:A \to \R$, it is standard that the (scheme-theoretic) tangent 
space to $V(I)$ at the point is in bijection with homomorphisms
$A \to \R[\epsilon]/(\epsilon^2)$ which recover $\varphi$ after composing
with the unique homomorphism $\R[\epsilon]/(\epsilon^2) \to \R$, which
necessarily sends $\epsilon$ to $0$.

In Example \ref{ex:xsquared}, a tangent vector in the ``negative direction''
is given by the homomorphism 
$\R[x]/(x^2) \to \R[\epsilon]/(\epsilon^2)$ sending $x$ to $-\epsilon$. If
we consider $-\epsilon$ to be ``negative'', we may thus consider the 
function $x$ to take a negative value on this tangent vector to $V(I)$.
We formalize and generalize this idea by considering also higher-order
infinitesimal arcs, as follows.

\begin{definition}
Given $f \in \R[\epsilon]/(\epsilon^m)$, $f = a_0 + a_1 \epsilon + \cdots + a_{m-1} \epsilon^{m-1}$,  we say $f$ is \textbf{nonnegative} if $f=0$, or $a_N > 0$ where $N = \min \{j \ : \ a_j \neq 0\}$.
\end{definition}

Note that $\R[\epsilon]/(\epsilon^m)$ has a unique homomorphism to $\R$,
necessarily sending $\epsilon$ to $0$. We say that 
$\varphi:A \to \R[\epsilon]/(\epsilon^m)$ is \textbf{at $P$} for (a
necessarily unique) $P \in V_{\R}(I)$ if $P$ is the point corresponding
to the composed homomorphism $A \to \R$. 

\begin{definition}
Let $I \subseteq \rx$ be an ideal, and $A:=\rx/I$. Given $P \in V_{\R}(I)$,  we say $f \in A$ is \textbf{strongly nonnegative} at $P$ if for every $m \geq 0$ and for every $\R$-algebra homomorphism 
\[
\varphi: A \to \R[\epsilon]/(\epsilon^m)
\]
at $P$, we
have $\varphi(f)$ is nonnegative. We say $f$ is \textbf{strongly nonnegative}
on $V(I)$
if it is strongly nonnegative at $P$ for all $P \in V_{\R}(I)$.
\end{definition}

We begin with some basic observations on the property of strong nonnegativity.

\begin{proposition}\label{prop:basic} Given $f \in A$, we have the following
statements.
\begin{enumerate}
\item If $f$ is strongly nonnegative
at $P \in V_{\R}(I)$, then $f$ is nonnegative at $P$.
\item If $f$ is strictly positive at $P \in V_{\R}(I)$, then $f$ 
is strongly nonnegative at $P$.
\item If $f$ is a sum of squares, then $f$ is strongly nonnegative.
\end{enumerate}
\end{proposition}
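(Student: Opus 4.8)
The plan is to dispatch parts (1) and (2) straight from the definitions, and to isolate the real content of part (3) as a statement about $\R[\epsilon]/(\epsilon^m)$ alone.

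For part (1), I would take $m=1$, so that $\R[\epsilon]/(\epsilon^m)=\R$, and note that the unique $\R$-algebra homomorphism $\varphi\colon A\to\R$ at $P$ is evaluation at $P$. Strong nonnegativity then forces $\varphi(f)=f(P)$ to be nonnegative as an element of $\R$, i.e.\ $f(P)\geq 0$, which is exactly nonnegativity at $P$. For part (2), let $\varphi\colon A\to\R[\epsilon]/(\epsilon^m)$ be an arbitrary homomorphism at $P$ and write $\varphi(f)=a_0+a_1\epsilon+\cdots+a_{m-1}\epsilon^{m-1}$. Composing $\varphi$ with the projection $\R[\epsilon]/(\epsilon^m)\to\R$ that kills $\epsilon$ yields evaluation at $P$, by definition of ``at $P$'', so $a_0=f(P)$. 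If $f$ is strictly positive at $P$ then $a_0>0$, so $\varphi(f)$ is nonnegative with $N=0$; since $\varphi$ and $m$ were arbitrary, $f$ is strongly nonnegative at $P$.

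For part (3), suppose $f=\sum_{i=1}^r g_i^2$ with $g_i\in A$. Given $P\in V_{\R}(I)$ and a homomorphism $\varphi\colon A\to\R[\epsilon]/(\epsilon^m)$ at $P$, we have $\varphi(f)=\sum_i\varphi(g_i)^2$, so it suffices to prove that any sum of squares in $\R[\epsilon]/(\epsilon^m)$ is nonnegative in the sense of the first definition. I would first analyze a single square: if $h=b_0+b_1\epsilon+\cdots\in\R[\epsilon]/(\epsilon^m)$ is nonzero, put $k=\min\{j:b_j\neq 0\}$; then $h^2$ either vanishes (when $2k\geq m$) or has lowest-order term $b_k^2\epsilon^{2k}$, whose exponent is even and whose coefficient $b_k^2$ is strictly positive. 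Now for $\sum_i h_i^2$, discard the summands that vanish: if all vanish the sum is $0$ and we are done; otherwise let $2k$ be the minimum of the lowest orders $2k_i$ over the surviving summands. Each surviving $h_i^2$ has all coefficients of degree $<2k_i$ equal to zero, and since $2k\leq 2k_i$ this gives, in particular, vanishing of all coefficients of $\sum_i h_i^2$ in degrees $<2k$. In degree $2k$ the coefficient is $\sum_{i:\,2k_i=2k}b_{k_i}^2$, a nonempty sum of strictly positive reals, hence strictly positive. Thus $N=2k$ and $a_N>0$, so the sum is nonnegative.

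The only subtle point — hence the main obstacle — is the non-cancellation in part (3): although distinct squares may share the same lowest-order exponent, their leading coefficients are all positive, so they add rather than cancel. Keeping track of this, together with correctly handling the squares that become zero in $\R[\epsilon]/(\epsilon^m)$ because their order is at least $m$, is all that the proof requires; parts (1) and (2) are immediate.
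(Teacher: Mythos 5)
Your proposal is correct and follows essentially the same route as the paper: $m=1$ for (1), positivity of the constant term for (2), and for (3) the observation that each square $\varphi(h_i)^2$ has positive leading coefficient in even degree so the leading terms cannot cancel. Your write-up merely makes explicit the non-cancellation and the handling of squares that vanish in $\R[\epsilon]/(\epsilon^m)$, which the paper leaves implicit.
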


\begin{proof} We obtain (1) immediately by setting $m=1$ in the definition,
since this yields the evaluation map at $P$.

For (2), given any homomorphism $\varphi:A \to \R[\epsilon]/(\epsilon^m)$
at $P$, by definition we have that composing with 
$\R[\epsilon]/(\epsilon^m) \to \R$ gives the evaluation map at $P$, 
under which
$f$ is strictly positive by hypothesis. But then if we write
$\varphi(f) = a_0 + a_1 \epsilon + \cdots + a_{n-1} \epsilon^{n-1}$, we 
must have $a_0=f(P)>0$, and thus $\varphi(f)$ is nonnegative. Since $\varphi$
was arbitrary at $P$, we conclude $f$ is strongly nonnegative at $P$.

Finally, for (3) if $f = \sum_{i=1}^r h_i^2$, and 
$\varphi: A \to \R[\epsilon]/(\epsilon^m)$ is an $\R$-algebra homomorphism, 
then the leading term of each ${(\varphi(h_i))}^2$ is nonnegative, and hence 
so is that of $\varphi(f)$.
\end{proof}

We will show in Theorem \ref{thm:nonneg-nbhd} that
in fact if $f$ is strongly nonnegative at $P$, then it is nonnegative on
a neighborhood of $P$, and that the converse holds if $P$ is a nonsingular
point of $V(I)$. Of course, the converse does not hold in general.

\begin{example} 
\begin{figure}
\def\JPicScale{0.8}
\ifx\JPicScale\undefined\def\JPicScale{1}\fi
\unitlength \JPicScale mm
\begin{picture}(160,80)(0,0)
\linethickness{0.1mm}
\put(80,0){\line(0,1){80}}
\linethickness{0.1mm}
\put(40,40){\line(1,0){80}}
\linethickness{0.3mm}
\qbezier(60,60)(65.19,49.56)(70,44.75)
\qbezier(70,44.75)(74.81,39.94)(80,40)
\put(60,60){\vector(-1,2){0.12}}
\qbezier(80,40)(85.19,39.94)(90,44.75)
\qbezier(90,44.75)(94.81,49.56)(100,60)
\put(100,60){\vector(1,2){0.12}}
\put(80,40){\makebox(0,0)[cc]{}}

\linethickness{0.3mm}
\multiput(80,40.5)(2,0){1}{\line(1,0){1}}
\linethickness{0.3mm}
\multiput(81.5,40.5)(1.6,0.2){1}{\multiput(0,0)(0.8,0.1){1}{\line(1,0){0.8}}}
\linethickness{0.3mm}
\multiput(82.8,40.7)(1.4,0.4){1}{\multiput(0,0)(0.35,0.1){2}{\line(1,0){0.35}}}
\linethickness{0.3mm}
\multiput(83.9,41.1)(1.4,0.6){1}{\multiput(0,0)(0.23,0.1){3}{\line(1,0){0.23}}}
\linethickness{0.3mm}
\multiput(85,41.6)(1.2,0.6){1}{\multiput(0,0)(0.2,0.1){3}{\line(1,0){0.2}}}
\linethickness{0.3mm}
\multiput(86,42.1)(1.2,0.8){1}{\multiput(0,0)(0.2,0.13){3}{\line(1,0){0.2}}}
\linethickness{0.3mm}
\multiput(87,42.8)(1.2,1){1}{\multiput(0,0)(0.15,0.12){4}{\line(1,0){0.15}}}
\linethickness{0.3mm}
\multiput(88,43.6)(1,1){1}{\multiput(0,0)(0.12,0.12){4}{\line(1,0){0.12}}}
\linethickness{0.3mm}
\multiput(88.8,44.4)(1,1){1}{\multiput(0,0)(0.12,0.12){4}{\line(1,0){0.12}}}
\linethickness{0.3mm}
\multiput(89.6,45.2)(1,1){1}{\multiput(0,0)(0.12,0.12){4}{\line(1,0){0.12}}}
\linethickness{0.3mm}
\multiput(90.4,46)(1,1.2){1}{\multiput(0,0)(0.12,0.15){4}{\line(0,1){0.15}}}
\put(90.9,46.6){\vector(3,4){0.12}}
\put(53,74){\makebox(0,0)[cc]{$V(y-x^2,y^2)$}}

\put(71,32){\makebox(0,0)[cc]{$(0,0)$}}

\put(103,51){\makebox(0,0)[cc]{$x \mapsto \epsilon$}}

\put(104,47){\makebox(0,0)[cc]{$y \mapsto \epsilon^2$}}

\end{picture}
\caption{$-y$ is not strongly nonnegative on $V(y-x^2,y^2)$}
\end{figure}
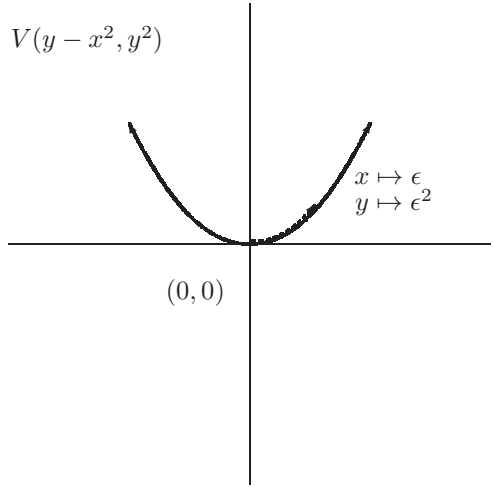

Consider $I=(y-x^2,y^2) \subseteq \R[x,y]$, and $P=(0,0)$ the only point
of $V_{\R}(I)$. Then $-y$ is not strongly nonnegative on $V(I)$: 
under the homomorphism $\varphi:\R[x,y]/I \to \R[\epsilon]/(\epsilon^3)$
at $P$ sending $x$ to $\epsilon$ and $y$ to $\epsilon^2$, we have
$\varphi(-y)=-\epsilon^2$ is not nonnegative. 

On the other hand, $y$ is strongly nonnegative on $V(I)$ by Proposition 
\ref{prop:basic} (3), since $y=x^2$ modulo $I$.
\end{example}

We also give an example where $V(I)$ is reduced (i.e., $I$ is radical)
for which strong nonnegativity is strictly stronger than nonnegativity.

\begin{example}\label{ex:cusp}
\begin{figure}
\def\JPicScale{0.8}
\ifx\JPicScale\undefined\def\JPicScale{1}\fi
\unitlength \JPicScale mm
\begin{picture}(160,80)(0,0)
\linethickness{0.1mm}
\put(80,0){\line(0,1){80}}
\linethickness{0.1mm}
\put(40,40){\line(1,0){80}}
\linethickness{0.3mm}
\qbezier(80,40)(90.38,39.88)(100,49.5)
\qbezier(100,49.5)(109.62,59.12)(120,80)
\linethickness{0.3mm}
\qbezier(80,40)(90.38,39.6)(100,29.97)
\qbezier(100,29.97)(109.62,20.35)(120,0)
\linethickness{0.3mm}
\multiput(65,40)(2,0){8}{\line(1,0){1}}
\put(65,40){\vector(-1,0){0.12}}
\put(64,34){\makebox(0,0)[cc]{$x \mapsto -\epsilon$}}

\put(62.5,28){\makebox(0,0)[cc]{$y \mapsto 0$}}

\put(60,65){\makebox(0,0)[cc]{$V(y^2-x^3)$}}

\end{picture}
\caption{The negative direction $(-1,0)$ at $(0,0)$ on $V(y^2-x^3)$}
\end{figure}
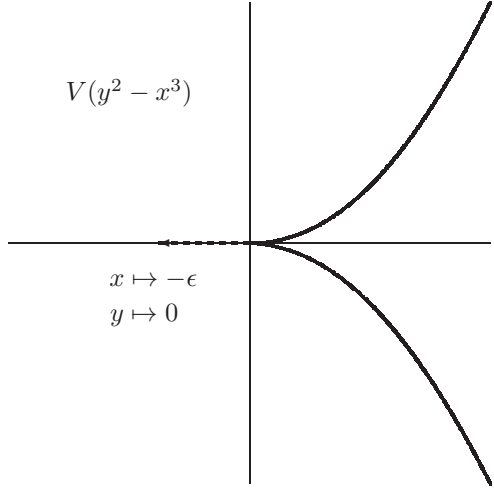

Consider the ideal $I = (y^2 - x^3) \subseteq \R[x,y]$, and the function $f(x,y)=x$ nonnegative on $V_{\R}(I)$.  Note that $f(x,y)$ is negative on the direction $(-1,0)$ at the singular point $(0,0)$ of $V(I)$.  This is realized algebraically by the homomorphism 
\[
\varphi: \R[x,y]/(y^2-x^3) \to \R[\epsilon]/(\epsilon^2), \ \ \varphi(x) = -\epsilon \ \ \varphi(y)=0,
\]
at $P$,
which proves $f$ is not strongly nonnegative since the leading coefficient of $\varphi(f)=\varphi(x)=-\epsilon$ is negative.  Thus, $f$ cannot be a sum of squares.

This example may be made compact by instead setting $I = (y^2-x^3+x^4)$.
\end{example}

\begin{remark} A suitable local version of Proposition \ref{prop:basic} (3)
may be described in terms of the complete local ring $\hat{A}_P$ of $V(I)$
at $P$. Specifically, if $f$ is a sum of squares in $\hat{A}_P$, then $f$
is strongly nonnegative at $P$.
The proof is the same, since any homomorphism 
$A \to \R[\epsilon]/(\epsilon^m)$ at $P$ factors through the complete
local ring.
\end{remark}

\begin{remark}
Note that if there exists a homomorphism $A \to \R[\epsilon]/(\epsilon^m)$
such that the image of $f$ has its leading term in odd degree,
then $f$ is not strongly nonnegative, since we may change the sign of the
coefficient by composing with the automorphism of $\R[\epsilon]/(\epsilon^m)$
sending $\epsilon$ to $-\epsilon$.
\end{remark}

We now consider the deeper question of the relationship between strong
nonnegativity at a point, and nonnegativity in a neighborhood. This 
requires concepts related to nonsingularity, which for the sake of clarity,
we now recall:

\begin{definition}
Given $I \subseteq\rx$ and $P \in V_{\R}(I)$, set $A=\rx/I$, and let 
$\fm_P \subseteq A$ be the maximal ideal of $A$ consisting of polynomials 
vanishing at $P$.
Note that because $P \in V_{\R}(I)$, we have $A/\fm_P \cong \R$.
The \textbf{cotangent
space} of $V(I)$ at $P$ is the real vector space $\fm_P/\fm_P^2$, and
the \textbf{tangent space} of $V(I)$ at $P$ is the dual space 
$\Hom_{\R}(\fm_P/\fm_P^2, \R)$. The \textbf{dimension} of $V(I)$ at $P$
is the dimension of the local ring $A_{\fm_P}$. Finally,
$V(I)$ is \textbf{nonsingular} at $P$ if the tangent space at $P$ has
dimension equal to the dimension of $V(I)$ at $P$.
\end{definition}

We then have the following:

\begin{theorem}\label{thm:nonneg-nbhd}
Given $I \subseteq \rx$ and a point $P \in V_{\R}(I)$, suppose that 
$f\in A:=\rx/I$ is strongly nonnegative at $P$. Then $f$ is
nonnegative in a (real) neighborhood of $P$. Moreover, the converse holds
if $P$ is a nonsingular point of $V(I)$.
\end{theorem}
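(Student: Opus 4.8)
The plan is to prove the two directions separately, with the forward direction (strong nonnegativity at $P$ implies nonnegativity near $P$) by contrapositive, and the converse (for nonsingular $P$) by a direct construction of an offending arc from a curve on which $f$ is negative.

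For the forward direction, suppose $f$ is \emph{not} nonnegative on any real neighborhood of $P$; I want to produce a homomorphism $\varphi\colon A\to\R[\epsilon]/(\epsilon^m)$ at $P$ with $\varphi(f)$ not nonnegative. The geometric idea is that if $f$ takes negative values arbitrarily close to $P$, there is a real analytic (or Nash, or semialgebraic $C^1$) arc $\gamma\colon(-\delta,\delta)\to V_\R(I)$ with $\gamma(0)=P$ along which $f\circ\gamma$ is not identically zero and is negative for small $t>0$ (or small $t<0$). Write the Taylor expansion $f(\gamma(t)) = c_N t^N + O(t^{N+1})$ with $c_N < 0$ for the appropriate sign of $t$ — after possibly replacing $t$ by $-t$ we may assume $c_N<0$ with $t>0$, so $N$ could be even or odd, but the leading coefficient is genuinely negative. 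Truncating the power series expansions of the coordinate functions $x_i(\gamma(t))$ modulo $t^{m}$ for $m = N+1$ gives polynomials in $t$; reducing these mod $\epsilon^m$ defines an $\R$-algebra homomorphism $\R[x_1,\dots,x_n]\to\R[\epsilon]/(\epsilon^m)$, and it kills $I$ because $\gamma$ lands in $V_\R(I)$ (so every element of $I$ vanishes identically along $\gamma$, hence its truncation vanishes). This $\varphi$ is at $P$ and $\varphi(f)$ has leading term $c_N\epsilon^N$ with $c_N<0$, so $f$ is not strongly nonnegative at $P$. The one subtlety to nail down is the \emph{existence} of such a curve: this is where the curve selection lemma from real algebraic/semialgebraic geometry is invoked — the set $\{f<0\}\cap V_\R(I)$ has $P$ in its closure, so curve selection yields a real analytic arc into its closure through $P$; one then checks $f$ is not identically zero along it (if it were, $P$ would be in the closure of $\{f<0\}\cap\{f=0\}$, which still forces a nearby point with $f<0$, but working on the Zariski-closure-type sets carefully, or simply choosing the arc inside $\{f<0\}$ for $t>0$, handles this).

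For the converse, assume $P$ is nonsingular of dimension $d$, and suppose $f$ is not strongly nonnegative at $P$: there is $\varphi\colon A\to\R[\epsilon]/(\epsilon^m)$ at $P$ with $\varphi(f) = c_N\epsilon^N + \cdots$, $c_N<0$. By the Remark following Proposition \ref{prop:basic} we may assume $N$ is even (if $N$ were odd, composing with $\epsilon\mapsto-\epsilon$ would make the leading coefficient \emph{positive}, so in fact when $f$ fails strong nonnegativity via an odd-degree leading term we instead get a sign we can flip; the upshot is that either way there is an arc, possibly after sign change, realizing a negative even-order leading term OR we directly have a negative value — I will organize this so that the genuinely dangerous case is $N$ even with $c_N<0$). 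Now I use nonsingularity: at a nonsingular real point the completion $\hat A_P$ is a formal power series ring $\R[[t_1,\dots,t_d]]$, and more usefully, a neighborhood of $P$ in $V_\R(I)$ is a real analytic manifold of dimension $d$ with local analytic coordinates. The homomorphism $\varphi$ corresponds to a formal arc in $\hat A_P$, i.e. a $d$-tuple of formal power series in $\epsilon$; by Artin approximation (or, since we only need convergence on an arc, by the fact that a truncated formal arc can be realized by a convergent/analytic arc in a smooth analytic space), there is an honest real analytic arc $\gamma\colon(-\delta,\delta)\to V_\R(I)$ with $\gamma(0)=P$ agreeing with $\varphi$ to order $m > N$. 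Then $f(\gamma(t)) = c_N t^N + O(t^{N+1})$ with $c_N<0$ and $N$ even, so $f(\gamma(t))<0$ for all sufficiently small $t\neq0$, exhibiting points arbitrarily close to $P$ where $f$ is negative; hence $f$ is not nonnegative on any neighborhood of $P$.

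The main obstacle is the passage between \emph{formal} arcs (homomorphisms to $\R[\epsilon]/(\epsilon^m)$) and \emph{convergent/analytic} arcs on the real variety, in both directions. In the forward direction it is the curve selection lemma, which requires knowing the relevant set is semialgebraic — fine, since $V_\R(I)$ and $\{f<0\}$ are semialgebraic — and a little care that the selected arc has the right leading behavior. In the converse direction the key input is that \emph{at a nonsingular point} a finite-order formal arc lifts to an analytic arc staying on the variety; this is exactly where nonsingularity is used and is false in general (as Example \ref{ex:cusp} shows: the bad arc $x\mapsto-\epsilon$, $y\mapsto0$ on $y^2=x^3$ does not lift to a real arc on the curve near the cusp). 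I would cite the implicit function theorem / the structure of smooth analytic manifolds for this lifting, rather than the heavier Artin approximation machinery, since the smooth case is elementary: choose analytic coordinates realizing the tangent directions prescribed by $\varphi$ to first order, then adjust higher-order terms freely within the manifold.
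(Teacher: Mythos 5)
Your argument is correct, and while your forward direction is essentially the paper's (Curve Selection Lemma to produce an analytic half-branch inside $\{f<0\}\cap V_{\R}(I)$, then truncation of the Taylor expansion to get a homomorphism to $\R[\epsilon]/(\epsilon^m)$ with negative leading term), your converse takes a genuinely different route. The paper proves the converse by lifting a given jet $\varphi:A\to\R[\epsilon]/(\epsilon^m)$ at a nonsingular point to a full formal arc $A\to\R[[t]]$ via formal smoothness (a Hensel-type lifting), and then invokes Lemma \ref{lem:arcs}, whose equivalence of formal and analytic arcs rests on Greenberg/Artin approximation. You instead realize the finite-order jet directly by an honest analytic arc on $V_{\R}(I)$, using that at a nonsingular real point the real locus is a $d$-dimensional analytic manifold: in suitable local coordinates the jet is just a $d$-tuple of truncated power series, which you lift polynomially and push through the implicit function theorem. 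This buys you an elementary, approximation-free proof of the converse (the one place where nonsingularity enters), at the cost of having to verify the standard compatibility between the scheme-theoretic jets at $P$ (which factor through $\hat{A}_P\cong\R[[t_1,\dots,t_d]]$) and the analytic coordinates on the real manifold; the paper's route outsources exactly this kind of comparison to the cited approximation and lifting theorems, and has the side benefit that Lemma \ref{lem:arcs} is stated and proved without any smoothness hypothesis.

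Two small repairs. First, the parity digression in your converse is unnecessary and slightly off: ``not nonnegative'' in $\R[\epsilon]/(\epsilon^m)$ already means the leading coefficient $c_N$ is negative, and along an analytic arc matching the jet you get $f(\gamma(t))=c_Nt^N+O(t^{N+1})<0$ for small $t>0$ whether $N$ is even or odd, so no reduction to even $N$ is needed (and composing with $\epsilon\mapsto-\epsilon$ when $N$ is odd would make the leading coefficient positive, which is the wrong direction for your purposes). Second, when constructing the analytic arc you must match the jet to order at least $N+1$ (most naturally to the full order $m$) in \emph{all} coordinates, not ``to first order and then adjust''; this follows because the non-coordinate functions are analytic functions of the chosen local coordinates both analytically and formally, so their truncations agree automatically once the coordinate functions are matched to order $m$.
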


It will be convenient to extend our terminology as follows:

\begin{definition} Suppose $P \in V_{\R}(I)$. Then a homomorphism
$\varphi:A \to \R[[t]]$ is \textbf{at $P$} if the preimage of the ideal
generated by $t$ is the (maximal) ideal of functions vanishing at $P$.
\end{definition}

The following is the main technical lemma, which does not involve strong 
nonnegativity and which applies without the nonsingularity hypothesis. 
Although the result is well known in real algebraic geometry, we include
it for the convenience of the reader.

\begin{lemma}\label{lem:arcs}
Given $I \subseteq \rx$ a point $P \in V_{\R}(I)$, and $f \in A:=\rx/I$, 
the following are equivalent:
\begin{enumerate}
\item $f$ is nonnegative in a (real) neighborhood of $P$;
\item for every homomorphism $\varphi:A \to \R[[t]]$ at $P$ taking values
in locally convergent power series, we have that the leading term of 
$\varphi(f)$ is nonnegative.
\item for every homomorphism $\varphi:A \to \R[[t]]$ at $P$, we have that
the leading term of $\varphi(f)$ is nonnegative.
\end{enumerate}
\end{lemma}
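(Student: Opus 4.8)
The plan is to establish the cycle $(3) \Rightarrow (2) \Rightarrow (1) \Rightarrow (3)$, drawing on exactly two classical results from real algebraic geometry: the curve selection lemma and the Artin approximation theorem. Throughout I fix generators $I = (g_1,\dots,g_r)$ and write $P = (P_1,\dots,P_n)$; the basic dictionary I will use is that a homomorphism $\varphi \colon A \to \R[[t]]$ at $P$ is precisely a tuple $(\phi_1,\dots,\phi_n) \in \R[[t]]^n$ with $\phi_i(0) = P_i$ and $g_j(\phi_1,\dots,\phi_n) = 0$ for all $j$, and that then $\varphi(f) = f(\phi_1,\dots,\phi_n)$. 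The implication $(3) \Rightarrow (2)$ needs no work: a homomorphism into the locally convergent power series is a fortiori a homomorphism into $\R[[t]]$.

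For $(2) \Rightarrow (1)$ I would argue contrapositively. If $f$ fails to be nonnegative on every neighborhood of $P$, then $P$ lies in the Euclidean closure of the semialgebraic set $S = \{Q \in V_{\R}(I) : f(Q) < 0\}$, and the curve selection lemma supplies a real analytic arc $\gamma$ with $\gamma(0) = P$ and $\gamma(t) \in S$ for all small $t > 0$. Taylor-expanding the coordinates of $\gamma$ yields a homomorphism $\varphi \colon A \to \R[[t]]$ at $P$ with locally convergent values: it kills $I$ because each $g_j \circ \gamma$ is analytic and vanishes for small $t>0$, hence identically; it is at $P$ because composing with $\R[[t]]\to\R$ recovers evaluation at $P$. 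Since $f \circ \gamma < 0$ for small $t > 0$, the series $\varphi(f)$ is nonzero with negative leading coefficient, contradicting $(2)$.

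For $(1) \Rightarrow (3)$, again contrapositively: suppose $\varphi$ is at $P$ with $\varphi(f)$ nonzero and having a negative leading coefficient $c$ in some degree $N$. Viewing $\varphi$ as a tuple $(\phi_i)$ solving the $g_j = 0$ over $\R[[t]]$, I would apply the Artin approximation theorem to replace it by a tuple $(\tilde\phi_i)$ of locally convergent power series, still solving $g_j = 0$, with $\tilde\phi_i \equiv \phi_i \pmod{t^{N+1}}$. Then $\tilde\phi_i(0) = P_i$, so the $\tilde\phi_i$ define a homomorphism $\tilde\varphi \colon A \to \R[[t]]$ at $P$ with convergent values, and because $f$ is a polynomial, $\tilde\varphi(f) \equiv \varphi(f) \pmod{t^{N+1}}$; since $N < N+1$, the leading term of $\tilde\varphi(f)$ is still $c t^N$. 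Now the $\tilde\phi_i$ converge on a common interval $(-\delta,\delta)$, so $\tilde\gamma(t) = (\tilde\phi_1(t),\dots,\tilde\phi_n(t))$ traces a real analytic arc through $P$ lying in $V_{\R}(I)$ (as $g_j(\tilde\gamma(t)) = 0$ identically), along which $f(\tilde\gamma(t)) = c t^N + \cdots < 0$ for all small $t > 0$. Thus $f$ is negative at points of $V_{\R}(I)$ arbitrarily near $P$, contradicting $(1)$, and the cycle closes.

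I expect the genuine obstacle to be the appeal to Artin approximation in $(1) \Rightarrow (3)$: everything else is either formal bookkeeping or the standard curve selection argument, but passing from an abstract formal arc on the scheme $V(I)$ to an honest real analytic arc on $V_{\R}(I)$ is exactly the point at which the equivalence stops being elementary. One could instead substitute a {\L}ojasiewicz-type estimate on the distance to $V_{\R}(I)$ for the Artin step, but this amounts to reproving the relevant case of approximation, so I would simply cite Artin.
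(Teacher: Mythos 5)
Your proposal is correct and follows essentially the same route as the paper: the Curve Selection Lemma handles the passage from failure of nonnegativity near $P$ to an analytic arc, and Artin-type approximation (the paper cites Greenberg's theorem, a special case) converts a formal arc with negative leading term into a convergent one. The only difference is bookkeeping — you arrange the implications as a cycle $(3)\Rightarrow(2)\Rightarrow(1)\Rightarrow(3)$, folding the paper's "evaluate the convergent arc at small $t_0$" step into $(1)\Rightarrow(3)$, whereas the paper proves $(1)\Leftrightarrow(2)$ and $(2)\Leftrightarrow(3)$ separately.
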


Geometrically, a homomorphism $A \to \R[[t]]$ at $P$ taking values in 
locally convergent power series defines an
\emph{analytic arc} at $P$; that is, we obtain an analytic map from 
$(-c,c) \subseteq \R$ to $V(I)$ for some $c>0$, sending $0$ to $P$. By
analogy, we think of an arbitrary homomorphism $A \to \R[[t]]$ at $P$
as a \emph{formal arc} at $P$. Thus, the content of the lemma may be
viewed as saying that $f$ is nonnegative on a neighborhood of $P$ if and
only if it is nonnegative on every analytic arc at $P$, if and only if
it is nonnegative on every formal arc at $P$.

\proof We first show that (1) and (2) are equivalent. The implication that
(1) implies (2) is straightforward. Indeed, if $\varphi(f)$ has negative
leading term for some $\varphi$, then for $t_0$ sufficiently small and 
positive, we would have $\varphi(f)(t_0)<0$, and because 
$$\varphi(f)(t_0)=f(\varphi(x_1)(t_0),\dots,\varphi(x_n)(t_0)),$$
the points $(\varphi(x_1)(t_0),\dots,\varphi(x_n)(t_0))$ would yield points
arbitrarily close to $P$ with $f$ negative.  For the converse, we appeal to 
the Curve Selection Lemma (see Theorem VII.4.2 and Remarks VII.4.3 of 
\cite{ConstructibleSets}). Suppose that (1) is false.  Then $P$ is
in the closure of the set $S = \{x \in V_{\R}(I) \ : \ f(x) < 0\}$.  
Now, $S$ is semi-algebraic, so by the Curve Selection Lemma, there exists 
a half-branch at $P$ of an algebraic curve contained in $V(I)$ such that
away from $P$, the half-branch is contained in $S$. This half-branch is
in particular analytic, so it is defined by a
homomorphism $\varphi: A \to \R[[t]]$ at $P$ taking values in locally
convergent power series, and moreover we have that
$\varphi(f)$ is negative for all sufficiently small positive values of $t$.
We conclude that $\varphi(f)$ has negative leading coefficient, as desired.

We now move on to proving the equivalence of (2) and (3).  Of course, (3)
trivially implies (2). The key ingredient for the converse is an Artin-style 
approximation theorem. Suppose we have $\varphi:A \to \R[[t]]$ at $P$ such
that $\varphi(f)$ has negative leading term. A theorem of Greenberg 
\cite{Greenberg} (which is a special case of Artin's approximation theorem;
see also \S VII.3 of \cite{ConstructibleSets})
asserts that we can replace $\varphi$ by a homomorphism $\varphi'$
which takes values in locally convergent power series and agrees with
$\varphi$ to arbitrarily high order; that is, for any fixed $N$, we
can find $\varphi'$ such that for all $g \in A$, we have that the
first $N$ terms of $\varphi'(g)$ agree with the first $N$ terms of 
$\varphi(g)$. In particular, we may choose 
$\varphi'$ such that $\varphi'(f)$ still has negative leading term, and
we thus conclude the desired result.
\qed



The proof of Theorem \ref{thm:nonneg-nbhd} is almost immediate from
Lemma \ref{lem:arcs}.

\begin{proof}[Proof of Theorem \ref{thm:nonneg-nbhd}]
First suppose that $f$ is not nonnegative on any neighborhood of $P$. Then 
Lemma \ref{lem:arcs} implies that there exists a
homomorphism $A \to \R[[t]]$ under which $f$ has negative leading term.
If the leading term occurs in degree $m-1$, truncating from $\R[[t]]$
to $\R[\epsilon]/(\epsilon^{m})$ via $t \mapsto \epsilon$ then shows that 
$f$ is not strongly nonnegative.

Conversely, suppose that
$f$ is nonnegative on a neighborhood of $P$ in $V_{\R}(I)$, and $V(I)$ is
nonsingular at $P$. Because nonsingularity is equivalent to smoothness
in characteristic $0$, by a generalization of Hensel's lemma 
if we have a homomorphism 
$\varphi:A \to \R[\epsilon]/(\epsilon^m)$ at $P$, we can lift to 
$\R[\epsilon]/(\epsilon^{m'})$ for $m'$ arbitrarily large
(see Proposition 2.2.15 and Proposition 2.2.6 of 
\cite{BoschLutkebohmertRaynaud}). 
Passing to the limit as $m'$ goes to $\infty$, we obtain a homomorphism
$\widetilde{\varphi}:A \to \R[[t]]$ lifting $\varphi$. It follows
from Lemma \ref{lem:arcs} that $\widetilde{\varphi}(f)$ must
either be $0$ or have positive leading coefficient, and we thus conclude
the same for $\varphi(f)$. Thus, $f$ is strongly nonnegative.
\end{proof}

\section{Obstructions to sums of squares}

We now apply the concept of strong nonnegativity to study obstructions
to nonnegative functions being sums of squares. We will use the concept
of degrees of functions, and consequently from this point on the choice
of imbedding of $V(I)$ into affine space becomes relevant.
Recall the following definition:

\begin{definition} Fix $I \subseteq \rx$. For $f \in \rx$ and $k \geq 1$, 
we say that $f$ is $k$-sos modulo $I$ if 
there exist $g_1,\dots,g_m \in \rx$ of degree at most $k$ such that 
$$f \equiv \sum_{i=1}^m g_i^2 \pmod{I}.$$
Given $d,k \geq 1$, 
we say that $I$ is \textbf{$(d,k)$-sos} if every $f \in \rx$ 
of degree at most $d$ which is nonnegative on $V_{\R}(I)$ is $k$-sos modulo
$I$.
\end{definition}

Note that if $f$ is $k$-sos modulo $I$, then $f$ is nonnegative on 
$V_{\R}(I)$, so the latter definition says that as many functions as 
possible (of degree at most $d$) are $k$-sos modulo $I$.

Proposition \ref{prop:basic} (3) then trivially implies:

\begin{corollary}\label{cor:sos-obstruct}
Let $I \subseteq \rx$ be an ideal.  If there exists a function $f\in A$
of degree less than or equal to $d$ which is nonnegative on $V_{\R}(I)$ but 
not strongly nonnegative, then $I$ is not $(d,k)$-sos for any $k$.
\end{corollary}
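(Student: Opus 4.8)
The plan is to derive Corollary~\ref{cor:sos-obstruct} as an essentially immediate consequence of Proposition~\ref{prop:basic}~(3), which asserts that any sum of squares is strongly nonnegative. First I would argue by contraposition on the conclusion: suppose, for the sake of contradiction, that $I$ \emph{were} $(d,k)$-sos for some $k$. By the hypothesis of the corollary, we are given a specific $f \in A$ of degree at most $d$ which is nonnegative on $V_{\R}(I)$; applying the definition of $(d,k)$-sos to this $f$ yields that $f$ is $k$-sos modulo $I$, i.e., there exist $g_1,\dots,g_m \in \rx$ of degree at most $k$ with $f \equiv \sum_{i=1}^m g_i^2 \pmod I$. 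In particular, the image of $f$ in $A$ is a sum of squares in $A$.

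Next I would invoke Proposition~\ref{prop:basic}~(3): since $f$ is a sum of squares in $A$, it is strongly nonnegative on $V(I)$, hence in particular strongly nonnegative at every $P \in V_{\R}(I)$. This directly contradicts the assumption that $f$ is not strongly nonnegative (which, unwinding the definition, means precisely that $f$ fails to be strongly nonnegative at some point $P \in V_{\R}(I)$, via some homomorphism $\varphi : A \to \R[\epsilon]/(\epsilon^m)$ at $P$ for which $\varphi(f)$ has a negative leading coefficient). This contradiction shows that no such $k$ can exist, which is exactly the assertion that $I$ is not $(d,k)$-sos for any $k$.

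The only minor subtlety — and really the only "step" worth spelling out — is the bookkeeping between the affine representative $f \in \rx$ appearing in the definition of $k$-sos and its image $f \in A = \rx/I$ appearing in the statement of strong nonnegativity; but this is transparent, since strong nonnegativity and the property of being a sum of squares are both properties of the class of $f$ modulo $I$, and the congruence $f \equiv \sum g_i^2 \pmod I$ is by definition a statement about that class. There is no genuine obstacle here: the work has all been done in establishing Proposition~\ref{prop:basic}~(3), and the corollary is purely a matter of contrapositive logic combined with unwinding the two definitions. I would keep the written proof to one or two sentences, simply pointing out that a $k$-sos function is a sum of squares modulo $I$, hence strongly nonnegative by Proposition~\ref{prop:basic}~(3), so a nonnegative-but-not-strongly-nonnegative $f$ of degree at most $d$ can never be $k$-sos modulo $I$ for any $k$.
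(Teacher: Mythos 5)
Your proposal is correct and matches the paper exactly: the paper states that Corollary~\ref{cor:sos-obstruct} follows trivially from Proposition~\ref{prop:basic}~(3), and your contrapositive argument (a $k$-sos function is a sum of squares modulo $I$, hence strongly nonnegative, contradicting the hypothesis) is precisely that trivial implication spelled out. No gaps; the bookkeeping remark about $f$ versus its class in $A$ is fine and harmless.
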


We now specialize to linear functions, and
recover an obstruction theorem of Gouveia and Netzer; see
Theorem 4.5 of \cite{GouveiaNetzer}. To give the statement, we define:

\begin{definition} A point $P \in V_{\R}(I)$ is \textbf{convex-singular} if it
is a singular point of $V(I)$, it lies on the relative boundary of 
$\conv(V_{\R}(I))$, and the tangent space to $V(I)$ at $P$ meets the 
relative interior of $\conv(V_{\R}(I))$. 
\end{definition}

\begin{remark}
Note that the tangent space of $V(I)$ at $P$ is canonically a subspace of 
the tangent space at $P$ of the ambient affine space $\R^n$, which is 
canonically identified via translation with $\R^n$ itself. Thus the 
definition makes sense.
\end{remark}

\begin{remark} 
Our definition differs slightly from that of \cite{GouveiaNetzer}, which
considers instead the tangent space of $V(\sqrt[\R]{I})$, where $\sqrt[\R]{I}$
is the real radical ideal associated to $I$. For instance, the origin
in $\R^3$ is convex-singular in $V(x^2+y^2)$ in our definition, but not in
\cite{GouveiaNetzer}. Indeed, we consider a point to be its own relative
interior, so for us the origin in $\R^2$ is also convex-singular in 
$V(x^2+y^2)$.
\end{remark}

The obstruction theorem is then the following:

\begin{theorem}\label{thm:not-sos}
Suppose we have $I \subseteq \rx$, and
$P \in V_{\R}(I)$ is convex-singular. Then $I$ is not $(1,k)$-sos for any $k$.
\end{theorem}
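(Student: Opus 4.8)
The plan is to deduce Theorem~\ref{thm:not-sos} from Corollary~\ref{cor:sos-obstruct} by producing, at a convex-singular point $P$, a linear function $\ell$ that is nonnegative on $V_\R(I)$ but not strongly nonnegative at $P$. First I would use the hypothesis that $P$ lies on the relative boundary of $\conv(V_\R(I))$: by the supporting hyperplane theorem there is an affine-linear function $\ell$ on $\R^n$ that vanishes at $P$, is nonnegative on $\conv(V_\R(I))$ (hence on $V_\R(I)$), and is not identically zero on $\conv(V_\R(I))$. After translating so that $P$ is the origin, $\ell$ becomes an honest linear functional $\ell(x) = \sum c_i x_i$, of degree $1$, and its restriction to $A$ is nonnegative on $V_\R(I)$. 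The remaining task is to show $\ell$ is not strongly nonnegative at $P$, and for this I expect to exhibit an explicit homomorphism $\varphi : A \to \R[\epsilon]/(\epsilon^2)$ at $P$ on which $\ell$ has negative leading term.

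The key step is to build that first-order arc using the other two hypotheses: that the tangent space $T := \Hom_\R(\fm_P/\fm_P^2,\R)$ meets the relative interior of $\conv(V_\R(I))$, and that $P$ is singular. A homomorphism $\varphi : A \to \R[\epsilon]/(\epsilon^2)$ at $P$ is exactly the data of a tangent vector $v \in T \subseteq \R^n$ (sending $x_i \mapsto 0 + v_i\epsilon$ after the translation), and under such $\varphi$ the leading term of $\varphi(\ell)$ is the coefficient $\sum c_i v_i = \ell(v)$, where here I am viewing $\ell$ as the linear functional on $\R^n$ whose kernel is the supporting hyperplane. So I need a tangent vector $v \in T$ with $\ell(v) < 0$. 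Since $T$ meets the relative interior of $\conv(V_\R(I))$, pick a point $q \in T \cap \operatorname{relint}\conv(V_\R(I))$; thinking of $q$ as a vector based at $P = 0$, it lies in $T$. Because $\ell$ is nonnegative on $\conv(V_\R(I))$, not identically zero there, and vanishes at the relative-boundary point $P$, the linear functional $\ell$ must be strictly positive at $q$ (a point where a nonnegative linear function vanishes on the relative boundary but not identically is strictly positive on the relative interior). Hence $\ell(q) > 0$, and therefore $v := -q \in T$ satisfies $\ell(v) = -\ell(q) < 0$. This gives a homomorphism $\varphi$ at $P$ with $\varphi(\ell) = \ell(v)\,\epsilon$ having negative leading coefficient, so $\ell$ is not strongly nonnegative at $P$.

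With $\ell$ in hand — linear, hence of degree at most $1 \le d$ for $d = 1$, nonnegative on $V_\R(I)$, but not strongly nonnegative — Corollary~\ref{cor:sos-obstruct} applied with $d = 1$ immediately yields that $I$ is not $(1,k)$-sos for any $k$, which is the assertion of the theorem. I would also remark that the singularity hypothesis on $P$ is what makes this non-vacuous: at a nonsingular point, Theorem~\ref{thm:nonneg-nbhd} forces a nonnegative function to be strongly nonnegative, so no such $\ell$ could exist; concretely, singularity is needed to guarantee that the tangent space $T$ has dimension large enough to poke into the relative interior of the convex hull while the variety itself stays on one side.

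The main obstacle I anticipate is the elementary-but-fiddly convexity bookkeeping in the middle step: correctly identifying tangent vectors at $P$ with points of $T \subseteq \R^n$ after translation (as the paper's remark after the definition of convex-singular already sets up), and carefully justifying that a nonnegative linear functional vanishing at a relative-boundary point is \emph{strictly} positive at any relative-interior point — this uses that the relative interior of a convex set, translated to have $P$ on its boundary, cannot be contained in the hyperplane $\{\ell = 0\}$ unless $\ell$ vanishes on the whole convex hull. One has to be slightly careful about the degenerate case where $\conv(V_\R(I))$ is a single point (so $P$ is its own relative interior); there the hypothesis that the tangent space meets the relative interior just says $P \in T$, which is automatic, but then there is no supporting hyperplane that is non-constant on the convex hull, so in fact the convex-singular hypothesis cannot hold in that degenerate case and there is nothing to prove — I would note this to keep the argument airtight.
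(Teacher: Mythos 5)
Your main argument is correct and is essentially the paper's argument: produce a linear function vanishing at $P$, nonnegative on $V_{\R}(I)$, and nonzero on the tangent space (using that the tangent space meets the relative interior of $\conv(V_{\R}(I))$, on which any such supporting functional is strictly positive), then negate a tangent vector from the relative interior to get a homomorphism to $\R[\epsilon]/(\epsilon^2)$ with negative leading term, and finish with Corollary~\ref{cor:sos-obstruct}. The only cosmetic difference in the main case is that the paper builds the supporting functional by separating a sequence of points outside $\overline{\conv(V_{\R}(I))}$ converging to $P$ and passing to a rescaled limit, while you invoke the relative-boundary form of the supporting hyperplane theorem directly; these are interchangeable.

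The genuine gap is your treatment of the degenerate case $V_{\R}(I)=\{P\}$. You dismiss it by asserting that the convex-singular hypothesis cannot hold there, but under the conventions adopted in the paper it can and is meant to: the remark preceding the theorem states explicitly that a point is regarded as its own relative interior and that the origin in $\R^2$ is convex-singular for $V(x^2+y^2)$, which is exactly this degenerate situation, and the theorem has real content there (e.g.\ $x$ is nonnegative on $V_{\R}(x^2+y^2)=\{0\}$ but is not a sum of squares modulo the ideal). Your supporting-hyperplane construction genuinely fails in this case, since no affine functional vanishing at $P$ is non-constant on the one-point hull, so as written your proof does not cover it. The fix is the one the paper uses: when $V_{\R}(I)=\{P\}$, take any linear $f$ vanishing at $P$ whose zero set does not contain the tangent space (possible because $P$ is singular, so the tangent space is positive-dimensional); such $f$ is trivially nonnegative on $V_{\R}(I)$, is negative on some tangent vector, hence is not strongly nonnegative, and Corollary~\ref{cor:sos-obstruct} applies. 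Also note that your non sequitur ("no non-constant supporting hyperplane, hence the hypothesis fails") should in any case be replaced by an argument about the relative boundary, and, as just explained, that reading conflicts with the paper's stated convention.
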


\proof 
We claim that there is a linear function $f$ which is nonnegative on
$V_{\R}(I)$, vanishes at $P$, and
induces a nonzero linear function on the tangent space of $V(I)$ at $P$.
In the case that $V_{\R}(I)=\{P\}$, this is trivial: we may take any $f$
whose zero set contains $P$ but not the tangent space at $P$. Thus suppose
$V_{\R}(I)$ is not a single point. 
If we choose a sequence of points in the affine hull of $V_{\R}(I)$ but 
outside $\overline{\conv(V_{\R}(I))}$
converging to $P$, the Separation Theorem (Theorem III.1.3 in 
\cite{barvibook}) gives us a sequence of linear functions on the affine hull,
nonnegative on $V_{\R}(I)$ and negative on the points in our sequence. 
Taking a suitable limit of these (rescaling as necessary) gives a nonzero 
linear function $\bar{f}$ on the affine hull, nonnegative on $V_{\R}(I)$, 
and with $\bar{f}(P)=0$.
We then have that $\bar{f}$ must be strictly positive on the relative 
interior of $V_{\R}(I)$. Choose $f$ to be any lift of $\bar{f}$ to a linear
function on $\R^n$. Now, since $f$ is linear it induces the same function
on the tangent space to $\R^n$ at $P$, and via restriction on the tangent
space to $V(I)$ at $P$. By hypothesis the latter tangent 
space meets the relative interior of $V_{\R}(I)$, so we see that the induced 
function on the tangent space is nonzero, completing the proof of the claim.

Now, because $f$ induces a nonzero linear function on the tangent space, 
there is a tangent vector on which $f$ is negative, and this
corresponds to a homomorphism 
$\varphi:\rx/I \to \R[\epsilon]/(\epsilon^2)$
at $P$ sending $f$ to a negative multiple of $\epsilon$. Thus, $f$ is not
strongly nonnegative. By Corollary \ref{cor:thetaobstruct}, we have that
$f$ is not a sum of squares, and hence $I$ is not $(1,k)$-sos for any $k$.
\qed

Hypersurfaces present a particularly nice case of the theorem.

\begin{corollary}\label{cor:hypersurface} Suppose $I=(g)$ is principal
in $\rx$, and suppose $P \in V_{\R}(I)$ is a singularity lying on the boundary
of $\conv(V_{\R}(I))$. Then $I$ is not $(1,k)$-sos for any $k$.
\end{corollary}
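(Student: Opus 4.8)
The plan is to derive Corollary \ref{cor:hypersurface} from Theorem \ref{thm:not-sos} by showing that, in the hypersurface case, a singular point $P$ lying on the boundary of $\conv(V_{\R}(I))$ is automatically convex-singular. The only condition in the definition of convex-singularity that is not already given by hypothesis is that the tangent space to $V(I)$ at $P$ meets the relative interior of $\conv(V_{\R}(I))$. So the entire content of the proof is to verify this one geometric fact, using that $I=(g)$ is principal.

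The key observation is that for a hypersurface $V(I)$ with $I=(g)$, the cotangent space $\fm_P/\fm_P^2$ is the quotient of the $n$-dimensional cotangent space of $\R^n$ by the image of $g$, so it has dimension either $n$ (if all partials of $g$ vanish at $P$) or $n-1$. Correspondingly, the tangent space to $V(I)$ at $P$, viewed inside $\R^n$ via the remark preceding Theorem \ref{thm:not-sos}, is either all of $\R^n$ or the hyperplane cut out by the linear form $\sum_i \frac{\partial g}{\partial x_i}(P)(x_i - P_i)$. First I would handle the case where the tangent space is all of $\R^n$: then it trivially meets the relative interior of $\conv(V_{\R}(I))$ (which is nonempty, being the relative interior of a nonempty convex set), so $P$ is convex-singular and we are done.

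The substantive case is when the tangent space $T$ is a genuine hyperplane $H$ through $P$. Here I would argue that $H$ must meet the relative interior of $C:=\conv(V_{\R}(I))$. Suppose not; then $H$ lies weakly on one side of $C$, i.e.\ $C$ is contained in one of the two closed half-spaces bounded by $H$, so $H$ is a supporting hyperplane of $C$ meeting it only along its boundary, and in particular $H$ contains the affine hull's intersection with... — more carefully, since $C \subseteq \R^n$ and $H$ is a hyperplane through the point $P \in C$ not meeting the relative interior of $C$, the affine hull of $C$ must itself be contained in $H$ (otherwise the relative interior of $C$, being open in that affine hull and containing points on both sides of $H$ arbitrarily near points of $C \cap H$, would meet $H$). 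But if the affine hull of $V_{\R}(I)$ is contained in the hyperplane $H = \{\ell = 0\}$ for a nonzero affine-linear $\ell$, then $\ell$ vanishes identically on $V_{\R}(I)$, so by the real Nullstellensatz some power $\ell^{2s} + (\text{sum of squares})$ lies in $I=(g)$, forcing $g \mid \ell^{2s}+\cdots$; this should contradict that $V_{\R}(I)$ is not contained in any proper affine subspace unless... — actually the cleanest route is: if $V_{\R}(I)$ spans an affine subspace of dimension $< n-1$, then since $V(I)$ has dimension $n-1$ at its smooth points, the real points would be "too small", but this can genuinely happen (e.g.\ $g = x^2+y^2$). So I would instead observe directly: $\dim T = n-1$ means $P$ is a point where not all partials of $g$ vanish, yet $P$ is singular, which is a contradiction — a point of a principal hypersurface is singular exactly when $g$ and all its partials vanish there. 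Hence $\dim T = n-1$ is incompatible with $P$ being singular, and only the first case occurs.

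So the proof collapses to: if $I=(g)$ and $P$ is a singular point of $V(I)$, then all first partials of $g$ vanish at $P$, hence the tangent space to $V(I)$ at $P$ is all of $\R^n$, which meets the relative interior of $\conv(V_{\R}(I))$; combined with $P$ being singular and on the boundary of $\conv(V_{\R}(I))$, this makes $P$ convex-singular, and Theorem \ref{thm:not-sos} applies. The main obstacle is making precise the comparison between the scheme-theoretic notion of singularity (dimension of $\fm_P/\fm_P^2$ exceeds the local dimension, which for a hypersurface is $n-1$) and the Jacobian criterion ``all partials of $g$ vanish at $P$''; this requires knowing that the local dimension of a principal hypersurface $V(g)$ at any point is exactly $n-1$ (assuming $g \neq 0$ and $g$ not a unit near $P$), which is Krull's principal ideal theorem together with $\rx$ being a domain. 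I would state this comparison as the one nontrivial input and cite it, then the rest is a short geometric deduction.

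\begin{proof}[Proof of Corollary \ref{cor:hypersurface}]
By Theorem \ref{thm:not-sos}, it suffices to show that $P$ is convex-singular.
By hypothesis $P$ is a singular point of $V(I)$ lying on the boundary of
$\conv(V_{\R}(I))$, so we need only check that the tangent space to $V(I)$ at
$P$ meets the relative interior of $\conv(V_{\R}(I))$. Since $I=(g)$ is
principal with $g\neq 0$ (otherwise $V(I)=\R^n$ is nonsingular everywhere) and
$g$ vanishes at $P$ (so $g$ is not a unit in the local ring $A_{\fm_P}$),
Krull's principal ideal theorem together with $\rx$ being a domain gives that
the dimension of $V(I)$ at $P$ equals $n-1$. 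Thus, $V(I)$ being singular at $P$
means the tangent space $\Hom_{\R}(\fm_P/\fm_P^2,\R)$ has dimension at least
$n$, hence exactly $n$, i.e., it is all of the tangent space to $\R^n$ at $P$.
Under the identification of the latter with $\R^n$ via translation (see the
remark preceding Theorem \ref{thm:not-sos}), the tangent space to $V(I)$ at $P$
is therefore all of $\R^n$, which certainly meets the relative interior of
$\conv(V_{\R}(I))$ (a nonempty set, since $\conv(V_{\R}(I))$ is nonempty). Hence
$P$ is convex-singular, and the conclusion follows from Theorem
\ref{thm:not-sos}.
\end{proof}
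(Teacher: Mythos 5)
Your proof is correct and follows essentially the same route as the paper: since $V(I)$ is a hypersurface of dimension $n-1$, singularity at $P$ forces the tangent space to be all of $\R^n$, which trivially meets the relative interior of $\conv(V_{\R}(I))$, so $P$ is convex-singular and Theorem \ref{thm:not-sos} applies. The only difference is that you spell out the dimension count (via Krull's principal ideal theorem) that the paper's one-line proof leaves implicit; the exploratory hyperplane case in your discussion is correctly discarded and does not appear in the final argument.
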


\proof
The variety $V(I)$ has codimension one, so the tangent space at the singular point $0$ is all of $\R^n$. Thus, $P$ is convex-singular, and we conclude
the desired result from Theorem \ref{thm:not-sos}. 
\qed

The following example is a basic example of applying the theorem on
convex singularities.

\begin{example}\label{ex:cusp-2}
Consider the ideal $I = (y^2 - x^3) \subseteq \R[x,y]$ from Example~
\ref{ex:cusp}.  The singular point $P=(0,0)$ of $V(I)$ lies on the boundary 
of $\conv(V_{\R}(I))$, so by Corollary~\ref{cor:hypersurface} we have that 
$I$ is not $(1,k)$-sos for any $k$.  Of course, this also follows from
Corollary~\ref{cor:sos-obstruct} and Example~\ref{ex:cusp}.  As in the
earlier example, this may be made compact by instead setting 
$I = (y^2-x^3+x^4)$.
\end{example}

However, we also see that Corollary~\ref{cor:sos-obstruct} works more generally than for convex singularities. Indeed, convex singularities may be viewed as causing strong nonnegativity to fail at first order, while the general definition requires examining all orders.

\begin{example}\label{ex:squiggly} 
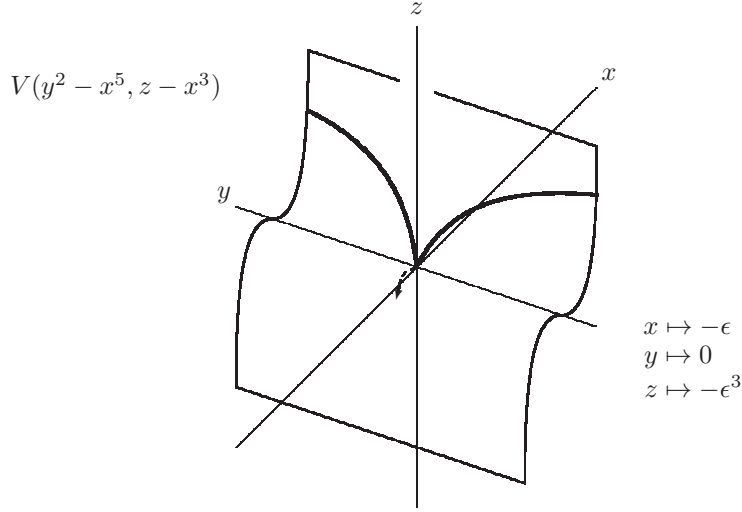
\begin{figure}
\def\JPicScale{0.8}
\ifx\JPicScale\undefined\def\JPicScale{1}\fi
\unitlength \JPicScale mm
\begin{picture}(160,80)(0,0)
\linethickness{0.1mm}
\put(80,0){\line(0,1){80}}
\linethickness{0.1mm}
\multiput(80,40)(0.12,0.12){250}{\line(1,0){0.12}}
\put(80,83){\makebox(0,0)[cc]{$z$}}
\linethickness{0.1mm}
\multiput(50,10)(0.12,0.12){250}{\line(1,0){0.12}}
\put(112,72){\makebox(0,0)[cc]{$x$}}
\linethickness{0.1mm}
\multiput(50,50)(0.36,-0.12){83}{\line(1,0){0.36}}
\linethickness{0.1mm}
\put(48,52){\makebox(0,0)[cc]{$y$}}
\multiput(80,40)(0.36,-0.12){83}{\line(1,0){0.36}}
\put(74,67){\makebox(0,0)[cc]{}}

\put(30,70){\makebox(0,0)[cc]{$V(y^2-x^5,z-x^3)$}}

\put(125,30){\makebox(0,0)[cc]{$x \mapsto -\epsilon$}}
\put(123.5,25){\makebox(0,0)[cc]{$y \mapsto 0$}}
\put(126,20){\makebox(0,0)[cc]{$z \mapsto -\epsilon^3$}}

\linethickness{0.25mm}
\multiput(62,76)(0.36,-0.12){42}{\line(1,0){0.36}}
\linethickness{0.25mm}
\multiput(83,69)(0.36,-0.12){75}{\line(1,0){0.36}}
\linethickness{0.25mm}
\multiput(50,20)(0.36,-0.12){133}{\line(1,0){0.36}}
\linethickness{0.25mm}
\qbezier(62,76)(62.02,61.39)(60.58,54.65)
\qbezier(60.58,54.65)(59.13,47.91)(56,48)
\qbezier(56,48)(52.87,48.09)(51.42,41.35)
\qbezier(51.42,41.35)(49.98,34.61)(50,20)
\linethickness{0.25mm}
\qbezier(110,60)(110.02,45.39)(108.58,38.65)
\qbezier(108.58,38.65)(107.13,31.91)(104,32)
\qbezier(104,32)(100.87,32.09)(99.42,25.35)
\qbezier(99.42,25.35)(97.98,18.61)(98,4)
\linethickness{0.4mm}
\qbezier(80,40)(79.01,49.37)(74.68,55.62)
\qbezier(74.68,55.62)(70.34,61.88)(62,66)
\linethickness{0.4mm}
\qbezier(80,40)(83.58,47.31)(90.8,50.2)
\qbezier(90.8,50.2)(98.02,53.09)(110,52)
\linethickness{0.25mm}
\multiput(78.9,39.6)(0.45,0.15){2}{\line(1,0){0.45}}
\linethickness{0.25mm}
\multiput(77.7,38.4)(0.12,0.18){5}{\line(0,1){0.18}}
\linethickness{0.25mm}
\multiput(77,36)(0.12,0.5){4}{\line(0,1){0.5}}
\put(76.7,35){\vector(-1,-4){0.12}}
\end{picture}
\caption{A higher-order infinitesimal arc on $V(y^2-x^5,z-x^3)$ pointing in the negative direction.}
\end{figure}

Consider the ideal $I = (y^2 - x^5,z-x^3) \subseteq \R[x,y,z]$, and the function $f(x,y,z)=z$ nonnegative on $V_{\R}(I)$.  The only singular point of $V(I)$ is $P=(0,0,0)$, and the tangent space to $V(I)$ at $P$ is precisely the plane $z=0$, so $P$ is not a convex singularity. However, $V(I)$ has higher-order infinitesimal arcs pointing into the negative direction of $z$, for instance given by the homomorphism 
\[
\varphi: \R[x,y,z]/(y^2-x^5,z-x^3) \to \R[\epsilon]/(\epsilon^4), \ \ \varphi(x) = -\epsilon \ \ \varphi(y)=0, \ \ \varphi(z)=-\epsilon^3
\]
at $P$.
Once again, we see that $f$ is not strongly nonnegative, and we conclude by Corollary~\ref{cor:sos-obstruct} that $I$ is not $TH_k$-exact for any $k$.

This example may also be made compact, by setting $I = (y^2-x^5+x^6,z-x^3)$.
\end{example}

However, we see that strong nonnegativity still has limitations in its
ability to recognize functions which are not sums of squares. For instance,
in Example \ref{ex:cusp}, if we took $f=x+c$ for any $c>0$ we would have
a function which is strictly positive, and hence strongly nonnegative,
but still not a sum of squares modulo $I$.
However, Schm\"udgen's Positivstellensatz implies 
(see Corollary 3 of \cite{Schmudgen}) 
that if $V_{\R}(I)$ is compact and $f$ is strictly
positive, then $f$ is a sum of squares. Since strong nonnegativity lies
between nonnegativity and strict positivity, it is natural to wonder if
a strongly nonnegative function is a sum of squares when $V_{\R}(I)$ is
compact. The following example shows that this is not the case.

\begin{example}\label{ex:sphere}
Let $I=(x_1^2+\dots+x_n^2-1)$ be the ideal of the sphere in $\R^n$, with 
$n \geq 4$. According to Theorem 2.6.3 of \cite{Marshall}, there exists a
polynomial function $f$ which is nonnegative on
$V_{\R}(I)$ but not a sum of squares modulo $I$. Since $V(I)$ is nonsingular,
we have by Theorem \ref{thm:nonneg-nbhd} that $f$ is strongly nonnegative
on $V(I)$.
 
If we wish to have an example with $f$ linear, we may simply
add an additional variable $y$, and add to $I$ the relation $y=f$, so that
the resulting coordinate rings are isomorphic. Then $y$ is 
strongly nonnegative, but is not a sum of squares modulo $I$.
\end{example}

\section{Obstructions to theta exactness}

Recall that the closure of the convex hull of a real variety $V_{\R}(I)$ can 
be described as the intersection of all halfspaces defined by linear 
functions nonnegative on it.  Determining a description of the closure of the 
convex hull of a real variety in terms of finitely many polynomial equations 
and inequalities is difficult in general.  To combat this, Gouveia, Parrilo 
and Thomas \cite{GouveiaParriloThomas2008} introduce a hierarchy of nested
spectrahedral shadows containing the convex hull of $V_{\R}(I)$.  The
\textbf{$k$-th theta body} denoted $TH_k(I)$ is precisely
\[
TH_k(I) = \{x \in \R^n : f(x) \geq 0 \ \ \forall \ f \mbox{ linear and }
k\mbox{-sos mod } I\}
\]

These theta bodies form a hierarchy of relaxations
\[
TH_1(I) \supseteq TH_2(I) \supseteq \cdots \supseteq \overline{\conv(V_{\R}(I))}
\]
of the closure of the convex hull of $V_{\R}(I)$.  When the $k$-th theta body coincides with $\overline{\conv(V_{\R}(I))}$, $I$ is said to be \textbf{$TH_k$-exact}. These two concepts are related by the following proposition from 
\cite{GouveiaParriloThomas2008}; see Proposition \ref{prop:strictsos} below 
for a stronger statement.

\begin{proposition}
Let $I \subseteq \rx$ be an ideal.  If $I$ is $(1,k)$-sos then $I$ is $TH_k$-exact.
\end{proposition}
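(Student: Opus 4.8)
The plan is to identify the two defining families of linear functions appearing in $TH_k(I)$ and in $\overline{\conv(V_{\R}(I))}$, and to show the hypothesis makes them coincide. Let $\mathcal{L}$ denote the set of $f \in \rx$ of degree at most one that are nonnegative on $V_{\R}(I)$, and $\mathcal{L}_k$ the set of $f \in \rx$ of degree at most one that are $k$-sos modulo $I$. By the note following the definition of $(d,k)$-sos, every $f \in \mathcal{L}_k$ is nonnegative on $V_{\R}(I)$, so $\mathcal{L}_k \subseteq \mathcal{L}$; this inclusion alone already yields $\overline{\conv(V_{\R}(I))} \subseteq TH_k(I)$, which is the relaxation property recalled above (and holds for every ideal and every $k$).

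For the reverse inclusion I would invoke the hypothesis that $I$ is $(1,k)$-sos: by definition this says exactly that every $f \in \mathcal{L}$ belongs to $\mathcal{L}_k$, so in fact $\mathcal{L} = \mathcal{L}_k$. Now combine this with the two halfspace descriptions
\[
TH_k(I) = \bigcap_{f \in \mathcal{L}_k} \{\, x \in \R^n : f(x) \geq 0 \,\}
\]
(which is the definition of $TH_k(I)$) and
\[
\overline{\conv(V_{\R}(I))} = \bigcap_{f \in \mathcal{L}} \{\, x \in \R^n : f(x) \geq 0 \,\},
\]
the latter being the description recalled at the start of this section --- a consequence of the Hahn--Banach separation theorem, together with the fact that an affine function is nonnegative on $V_{\R}(I)$ precisely when it is nonnegative on $\overline{\conv(V_{\R}(I))}$. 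Since $\mathcal{L} = \mathcal{L}_k$, the two intersections coincide, so $TH_k(I) = \overline{\conv(V_{\R}(I))}$; that is, $I$ is $TH_k$-exact.

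The proof is little more than unwinding definitions, and the only substantive ingredient is the standard convex-geometry fact that a closed convex set is the intersection of the closed halfspaces containing it. I do not expect any real obstacle; the one thing worth checking is that the argument behaves correctly in degenerate situations --- for instance when $V_{\R}(I)$ is empty or spans only a proper affine subspace of $\R^n$ --- but since every intersection above is taken over halfspaces inside $\R^n$, both descriptions remain valid verbatim, so no separate treatment is needed.
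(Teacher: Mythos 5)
Your argument is correct. Note, though, that the paper does not prove this proposition itself: it quotes it from Gouveia--Parrilo--Thomas and instead proves the stronger Proposition \ref{prop:strictsos} (weakly $(1,k)$-sos implies $TH_k$-exact), from which the statement follows since $(1,k)$-sos implies weakly $(1,k)$-sos. Your route is the direct one: since $k$-sos modulo $I$ implies nonnegativity on $V_{\R}(I)$ and the $(1,k)$-sos hypothesis gives the converse for degree-one polynomials, the family of linear functions defining $TH_k(I)$ coincides with the family of linear functions nonnegative on $V_{\R}(I)$, and the halfspace description of $\overline{\conv(V_{\R}(I))}$ finishes the proof. The paper's proof of the stronger statement cannot proceed this way, because its hypothesis only applies to \emph{strongly} nonnegative linear functions; it therefore argues pointwise, separating an external point $P$ by a linear $f$ and replacing it by $g = f - \tfrac{f(P)}{2}$ to get strict positivity on $V_{\R}(I)$, which forces strong nonnegativity via Proposition \ref{prop:basic}~(2) and hence a $k$-sos certificate excluding $P$ from $TH_k(I)$. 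So your approach is cleaner and fully adequate for the proposition as stated, while the paper's shift-to-strict-positivity argument buys the same conclusion under the weaker hypothesis; both ultimately rest on the separation theorem, which in your version is absorbed into the halfspace description of the closed convex hull.
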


Moreover, Gouveia, Parrilo and Thomas also proved the following remarkable
converse. See Corollary 2.12 of \cite{GouveiaParriloThomas2008}.

\begin{theorem}\label{thm:GPT}
Let $I \subseteq \rx$ be a real radical ideal.  Then $I$ is $(1,k)$-sos if and only if $I$ is $TH_k$-exact.
\end{theorem}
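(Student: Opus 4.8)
The plan is to reduce the equivalence to a statement about closed convex cones in which the real radical hypothesis enters exactly once. The implication ``$(1,k)$-sos $\Rightarrow$ $TH_k$-exact'' is the Proposition stated just above and holds for any ideal, so the content is the converse: assuming $I$ real radical and $TH_k$-exact, I must show that every linear $f$ nonnegative on $V_{\R}(I)$ is $k$-sos modulo $I$. One may assume $V_{\R}(I) \neq \emptyset$, since otherwise $I$ real radical and proper forces $I = \rx$ and the claim is vacuous.

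First I would fix the ambient space of cones. Let $E \subseteq \R^n$ be the affine subspace defined by the linear polynomials lying in $I$; then $V_{\R}(I) \subseteq E$, and since every linear polynomial in $I$ and its negative are $k$-sos modulo $I$, also $TH_k(I) \subseteq E$. Let $W$ be the finite-dimensional space of affine-linear functions on $E$. Inside $W$ lie the convex cone $C_k$ of restrictions to $E$ of linear functions that are $k$-sos modulo $I$, and the convex cone $N$ of linear functions nonnegative on $V_{\R}(I)$; Proposition \ref{prop:basic}(3) gives $C_k \subseteq N$, and $N$ is closed, being an intersection of halfspaces $\{f : f(P) \geq 0\}$ over $P \in V_{\R}(I)$. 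Identify $E$ with the chart $\{\lambda \in W^{*} : \lambda(1) = 1\}$ of $W^{*}$ via $P \mapsto \mathrm{ev}_P$. By definition $TH_k(I) = C_k^{\vee} \cap \{\lambda(1) = 1\}$, and since a closed convex hull is the intersection of the closed halfspaces containing it, $\overline{\conv(V_{\R}(I))} = N^{\vee} \cap \{\lambda(1) = 1\}$. Because $1 = 1^2$ lies in both $C_k$ and $N$, their dual cones lie in $\{\lambda(1) \geq 0\}$, and a closed convex cone contained in that halfspace is recovered from its section with $\{\lambda(1)=1\}$ once that section is nonempty (one checks directly that $K = \overline{\bigcup_{t\geq 0} t\,(K \cap \{\lambda(1)=1\})}$); here the sections are nonempty because $\mathrm{ev}_P \in C_k^{\vee}$ for $P \in V_{\R}(I)$. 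Hence $TH_k$-exactness is equivalent to $C_k^{\vee} = N^{\vee}$, and dualizing once more, with $N^{\vee\vee} = N$ and $C_k^{\vee\vee} = \overline{C_k}$, it is equivalent to $\overline{C_k} = N$.

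It then remains to prove that $C_k$ is closed, for then $C_k = \overline{C_k} = N$ is precisely the assertion that $I$ is $(1,k)$-sos, and this is where real radicality is used. Fix a basis $v = (v_1,\dots,v_s)$ of the image $A_{\le k}$ in $A := \rx/I$ of the polynomials of degree at most $k$, and let $\Phi$ be the linear map sending a symmetric $s\times s$ matrix $Q$ to $v^{\top} Q v \in A$. Then $C_k = \Phi(\widetilde K)$, where $\widetilde K = \{Q \succeq 0 : \Phi(Q) \in W\}$ is a closed convex cone. I would invoke the elementary fact that a linear image $L(K)$ of a closed convex cone is closed whenever $\ker L \cap K = \{0\}$ (if $L(x_n) \to y$ with $\|x_n\| \to \infty$, a subsequential limit of $x_n/\|x_n\|$ is a nonzero element of $\ker L \cap K$). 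Since $\widetilde K \subseteq \mathcal{S}_{+}$ (the cone of positive semidefinite symmetric matrices), it suffices to check $\ker \Phi \cap \mathcal{S}_{+} = \{0\}$: if $Q \succeq 0$ and $v^{\top} Q v = 0$ in $A$, write $Q = \sum_m c_m c_m^{\top}$, so that $\sum_m p_m^2 = 0$ in $A$ with $p_m := c_m^{\top} v \in A_{\le k}$; lifting to $\rx$ this says $\sum_m \tilde p_m^{\,2} \in I$, and the real radical property forces each $\tilde p_m \in I$, i.e.\ each $p_m = 0$ in $A$, hence each $c_m = 0$ and $Q = 0$.

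I expect the only real obstacle to be this closedness step; the cone-duality bookkeeping, especially keeping track of the homogenizing coordinate and confirming that $TH_k(I)$ and $\overline{\conv(V_{\R}(I))}$ genuinely lie in the chart $E$, is routine but must be carried out carefully. It is precisely the failure of $C_k$ to be closed for non--real-radical ideals (as already illustrated by $I=(x^2)$ in Example \ref{ex:xsquared}) that makes the hypothesis essential, so the argument above is, in effect, localizing the whole difficulty into the single linear-algebra fact $\ker\Phi\cap\mathcal{S}_+=\{0\}$.
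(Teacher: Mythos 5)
The paper does not actually prove this statement: Theorem \ref{thm:GPT} is quoted from Gouveia--Parrilo--Thomas (Corollary 2.12 of \cite{GouveiaParriloThomas2008}), so there is no internal proof to compare against. Judged on its own, your argument is correct, and it is essentially a reconstruction of the argument underlying the cited result: the forward implication is the quoted Proposition (or Proposition \ref{prop:strictsos} together with Proposition \ref{prop:basic}(1)), and the converse is reduced, via conic duality over the chart $\{\lambda(1)=1\}$, to the closedness of the cone $C_k$ of $k$-sos linear functions, which is exactly where real radicality enters: a positive semidefinite Gram matrix $Q$ with $v^{\top}Qv=0$ in $A$ yields a sum of squares lying in $I$, hence each square lies in $I$, hence $Q=0$, and the standard ``closed image of a closed cone when $\ker\Phi\cap\mathcal{S}_+=\{0\}$'' lemma applies. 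The duality bookkeeping you flag as routine is indeed fine but does carry the real content of several small steps you should make explicit in a written version: (i) the identification of $W$ (affine-linear functions on $E$) with the image $A_{\le 1}$ of the degree-$\le 1$ polynomials in $A=\rx/I$, which uses that an affine-linear polynomial vanishing on $E$ lies in the span of the linear polynomials of $I$ (valid because $E\neq\emptyset$), and which is what converts ``$N\subseteq C_k$ in $W$'' back into the literal $(1,k)$-sos property for polynomials; (ii) the disposal of $V_{\R}(I)=\emptyset$ uses the real Nullstellensatz, and in that case both conditions hold trivially rather than vacuously; (iii) since $A$ is infinite-dimensional, one should note that $\Phi$ takes values in the finite-dimensional subspace $A_{\le 2k}$, so the norm/subsequence argument for closedness takes place in finite dimensions. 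None of these is a gap in the mathematics, only in the write-up; the single genuinely nontrivial point, $\ker\Phi\cap\mathcal{S}_+=\{0\}$ from real radicality, is handled correctly.
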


This converse theorem, together with our results on obstructions to an
ideal being $(1,k)$-sos, immediately allow us to rephrase the latter results
in the real radical case in terms of obstructions to theta exactness. We
thus conclude:

\begin{corollary}\label{cor:thetaobstruct}
Let $I \subseteq \rx$ be a real radical ideal.  If there exists a linear function $f$ that is nonnegative on $V_{\R}(I)$ but not strongly nonnegative, then $I$ is not $TH_k$-exact for any $k$. \qed
\end{corollary}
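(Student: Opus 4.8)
The plan is to assemble two results already in hand: Corollary~\ref{cor:sos-obstruct} (which itself rests on Proposition~\ref{prop:basic}(3)) and the converse theorem of Gouveia, Parrilo and Thomas, Theorem~\ref{thm:GPT}. No new geometry or algebra is needed; the entire content is a compatibility check between hypotheses, which is why the statement is marked \qed.

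First I would observe that a linear function has degree at most $1$. Thus the hypothesis that $f$ is linear, nonnegative on $V_{\R}(I)$, but not strongly nonnegative is precisely the hypothesis of Corollary~\ref{cor:sos-obstruct} in the case $d=1$. Applying that corollary yields that $I$ is not $(1,k)$-sos for any $k \geq 1$: indeed, if $f$ were $k$-sos modulo $I$ for some $k$, then by Proposition~\ref{prop:basic}(3) it would be strongly nonnegative, contradicting the hypothesis.

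Next I would invoke the real radical hypothesis. By Theorem~\ref{thm:GPT}, for a real radical ideal $I$ the conditions ``$I$ is $(1,k)$-sos'' and ``$I$ is $TH_k$-exact'' are equivalent for each fixed $k$. Since we have just shown $I$ is not $(1,k)$-sos for any $k$, we conclude that $I$ is not $TH_k$-exact for any $k$, which is the assertion.

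The only place any care is required — and hence the nearest thing to an obstacle — is the appearance of the real radical hypothesis: it is not needed for the obstruction to $(1,k)$-sos, but it is exactly what licenses the passage from $(1,k)$-sos to $TH_k$-exactness via Theorem~\ref{thm:GPT}. (Without it one only gets the one-directional Proposition relating $(1,k)$-sos to $TH_k$-exactness, which runs the wrong way for this argument.) Everything else is immediate from the cited results.
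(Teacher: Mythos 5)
Your proposal is correct and matches the paper's (implicit) argument exactly: the paper also derives this by combining Corollary~\ref{cor:sos-obstruct} with $d=1$ and the real-radical equivalence of Theorem~\ref{thm:GPT}, which is why the statement carries a \qed with no separate proof. Your remark that the real radical hypothesis is needed only for the passage from $(1,k)$-sos to $TH_k$-exactness is the right reading of the paper's setup.
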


The obstruction theorem of Gouveia and Netzer as they stated it is 
equivalent to the following:

\begin{theorem}\label{thm:not-theta-exact}
Suppose we have $I \subseteq \rx$, and
$P \in V_{\R}(I)$ is a convex-singular point of $V(\sqrt[\R]{I})$, where 
$\sqrt[\R]{I}$
is the real radical ideal associated to $I$. Then $I$ is not $TH_k$-exact
for any $k$.
\end{theorem}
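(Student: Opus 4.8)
The plan is to deduce the statement from Theorem~\ref{thm:not-sos} and Theorem~\ref{thm:GPT}, after first reducing from $I$ to its real radical $J := \sqrt[\R]{I}$.

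The preliminary observation I would record is that $TH_k$-exactness of $I$ forces $TH_k$-exactness of $J$. Since $I \subseteq J$, every linear function that is $k$-sos modulo $I$ is also $k$-sos modulo $J$; comparing the defining systems of linear inequalities gives $TH_k(J) \subseteq TH_k(I)$. On the other hand $V_{\R}(J) = V_{\R}(I)$, so $\overline{\conv(V_{\R}(I))} = \overline{\conv(V_{\R}(J))} \subseteq TH_k(J)$ by the theta-body hierarchy. Hence
\[
\overline{\conv(V_{\R}(I))} \;\subseteq\; TH_k(J) \;\subseteq\; TH_k(I),
\]
and if the two outer terms coincide --- that is, if $I$ is $TH_k$-exact --- then $TH_k(J) = \overline{\conv(V_{\R}(J))}$, i.e.\ $J$ is $TH_k$-exact. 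Taking the contrapositive, it suffices to show that $J$ is not $TH_k$-exact for any $k$.

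Now $J$ is by construction a real radical ideal, and $P$ is a convex-singular point of $V(J) = V(\sqrt[\R]{I})$ by hypothesis. Applying Theorem~\ref{thm:not-sos} to $J$ shows that $J$ is not $(1,k)$-sos for any $k$, and then Theorem~\ref{thm:GPT} --- applicable precisely because $J$ is real radical --- yields that $J$ is not $TH_k$-exact for any $k$. Combined with the reduction above, this gives that $I$ is not $TH_k$-exact for any $k$. (Equivalently, one can extract from the proof of Theorem~\ref{thm:not-sos} a linear function that is nonnegative on $V_{\R}(J)$ but not strongly nonnegative on $V(J)$, and apply Corollary~\ref{cor:thetaobstruct} to $J$ directly.)

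Since essentially every ingredient is a previously established result, I do not expect a serious obstacle here; the one step requiring care is the reduction to the real radical, namely verifying that the inclusion of theta bodies runs $TH_k(J) \subseteq TH_k(I)$, so that exactness passes \emph{from} $I$ \emph{down to} $J$ rather than the reverse. This is exactly the direction that makes the argument work, and it is what lets us phrase the hypothesis in terms of $V(\sqrt[\R]{I})$ while drawing a conclusion about $I$ itself.
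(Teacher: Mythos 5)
Your proposal is correct and follows essentially the same route as the paper: apply Theorem~\ref{thm:not-sos} to the real radical $\sqrt[\R]{I}$, invoke Theorem~\ref{thm:GPT} to conclude $\sqrt[\R]{I}$ is not $TH_k$-exact, and use the inclusion $TH_k(\sqrt[\R]{I}) \subseteq TH_k(I)$ to transfer the conclusion to $I$. The only difference is that you spell out the justification of that inclusion and of the transfer step, which the paper leaves implicit.
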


\proof We conclude from Theorem \ref{thm:not-sos} that $\sqrt[\R]{I}$ is
not $(1,k)$-sos, and thus Theorem \ref{thm:GPT} implies that $\sqrt[\R]{I}$
is not $TH_k$-exact. Since $TH_k(\sqrt[\R]{I}) \subseteq TH_k(I)$, we conclude
the desired statement.
\qed

Similarly, we conclude:

\begin{corollary}\label{cor:hypersurface-theta} Suppose $I=(g)$ is principal
and real radical in $\rx$, and suppose $P \in V_{\R}(I)$ is a singularity lying 
on the boundary of $\conv(V_{\R}(I))$. Then $I$ is not $TH_k$-exact for any $k$.
\end{corollary}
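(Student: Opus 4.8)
The plan is to mimic the proof of Corollary~\ref{cor:hypersurface} and reduce to Theorem~\ref{thm:not-theta-exact}. The key observation is that a singular point of a hypersurface is automatically convex-singular once it lies on the boundary of the convex hull, and that for a real radical ideal we have $\sqrt[\R]{I}=I$, so no passage to the real radical is needed and the hypothesis of Theorem~\ref{thm:not-theta-exact} is met verbatim.

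First I would record the tangent space computation. Since $I=(g)$ is principal, $V(I)$ has codimension one in $\R^n$, so its dimension at $P$ is $n-1$. The tangent space to a hypersurface $V(g)$ at a point is the kernel of the differential of $g$ at that point; at the singular point $P$ this differential vanishes, so the tangent space to $V(I)$ at $P$ is all of $\R^n$. In particular it has dimension $n > n-1$, which reconfirms that $P$ is a singular point of $V(I)$ in the scheme-theoretic sense, and it trivially meets the relative interior of $\conv(V_{\R}(I))$.

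Next, combining this with the hypotheses that $P$ is a singularity lying on the boundary of $\conv(V_{\R}(I))$, I conclude that $P$ is a convex-singular point of $V(I)$. Since $I$ is real radical, $\sqrt[\R]{I}=I$, so $P$ is a convex-singular point of $V(\sqrt[\R]{I})$, and Theorem~\ref{thm:not-theta-exact} immediately gives that $I$ is not $TH_k$-exact for any $k$. Equivalently, one could invoke Corollary~\ref{cor:hypersurface} to get that $I$ is not $(1,k)$-sos for any $k$ and then apply Theorem~\ref{thm:GPT}, which is available precisely because $I$ is real radical.

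There is no substantive obstacle here; the corollary is essentially a packaging of results already proved. The only point requiring a moment's care is the implicit claim that $\conv(V_{\R}(I))$ has nonempty relative interior (so that ``meets the relative interior'' is not vacuous), which holds because the convex hull of any nonempty set has nonempty relative interior within its affine hull, and $V_{\R}(I)$ is nonempty since it contains $P$.
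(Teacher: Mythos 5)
Your proposal is correct and follows the same route as the paper: the principality of $I$ forces the tangent space at the singular point $P$ to be all of $\R^n$, so $P$ is convex-singular, and since $I$ is real radical one applies Theorem \ref{thm:not-theta-exact} (equivalently, Corollary \ref{cor:hypersurface} together with Theorem \ref{thm:GPT}). The extra remarks on the differential of $g$ and the nonemptiness of the relative interior are harmless elaborations of what the paper leaves implicit.
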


As before, Example \ref{ex:squiggly} gives an example in which Corollary
\ref{cor:thetaobstruct} goes further than Theorem \ref{thm:not-theta-exact};
indeed, in this case the ideal is real radical, so we conclude that it is
not $TH_k$-exact for any $k$.

\section{A new sum of squares condition}

Finally, we consider a weaker notion of $(1,k)$-sos arising from strong 
nonnegativity. 

\begin{definition}
Given $d,k \geq 1$, and an ideal $I\subseteq \rx$,
we say that $I$ is \textbf{weakly $(d,k)$-sos} if for every $f \in \rx$ 
of degree at most $d$ which is strongly nonnegative on $V_{\R}(I)$,
we have that $f$ is $k$-sos.
\end{definition}

Though being weakly $(1,k)$-sos relaxes the notion of being $(1,k)$-sos, it still implies $TH_k$-exactness.  This generalizes Lemma 1.5 of \cite{GouveiaParriloThomas2008}.

\begin{proposition}\label{prop:strictsos}
If $I$ is weakly $(1,k)$-sos, then $I$ is $TH_k$-exact.
\end{proposition}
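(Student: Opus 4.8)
The plan is to show that $\overline{\conv(V_\R(I))} \supseteq TH_k(I)$ (the reverse inclusion is part of the definition of the theta body hierarchy), by showing that every linear function $\ell$ that is nonnegative on $V_\R(I)$ arises, up to an additive nonnegative constant and scaling, as a limit of linear functions that are $k$-sos modulo $I$ — or more directly, that any halfspace $\{\ell \geq 0\}$ containing $V_\R(I)$ already contains $TH_k(I)$. Since $\overline{\conv(V_\R(I))}$ is the intersection of all such halfspaces, this suffices. So fix a linear $\ell$ with $\ell \geq 0$ on $V_\R(I)$; I want to produce, for every $\eps > 0$, a linear function that is $k$-sos modulo $I$ and agrees with $\ell + \eps$ closely enough to force $TH_k(I) \subseteq \{\ell + \eps \geq 0\}$, and then let $\eps \to 0$.

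The key step is this: for $\eps > 0$, the linear function $\ell + \eps$ is \emph{strictly positive} on $V_\R(I)$, hence by Proposition \ref{prop:basic}(2) it is strongly nonnegative on $V_\R(I)$; since it has degree $1 \leq 1$, the weak $(1,k)$-sos hypothesis gives that $\ell + \eps$ is $k$-sos modulo $I$. Therefore $TH_k(I) \subseteq \{x : (\ell+\eps)(x) \geq 0\}$ by the very definition of $TH_k(I)$. This holds for all $\eps > 0$, and $\bigcap_{\eps > 0}\{\ell + \eps \geq 0\} = \{\ell \geq 0\}$, so $TH_k(I) \subseteq \{\ell \geq 0\}$. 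Intersecting over all linear $\ell$ nonnegative on $V_\R(I)$ yields $TH_k(I) \subseteq \overline{\conv(V_\R(I))}$, and combined with the standard reverse inclusion $TH_k(I) \supseteq \overline{\conv(V_\R(I))}$ this gives $TH_k$-exactness.

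The one point requiring a moment of care — and the closest thing to an obstacle — is the passage to the limit: one must know that $\overline{\conv(V_\R(I))}$ really is cut out by the linear inequalities $\ell \geq 0$ with $\ell$ nonnegative on $V_\R(I)$, and that perturbing each such $\ell$ to $\ell + \eps$ and intersecting over $\eps$ recovers it. The first is exactly the description of the closed convex hull recalled at the start of Section 4; the second is the elementary observation above. There is also the degenerate case where there are \emph{no} linear functions nonnegative on $V_\R(I)$ other than constants (e.g.\ $V_\R(I)$ empty or affinely spanning), but then $\overline{\conv(V_\R(I))}$ is all of $\R^n$ (or empty), and the inclusion is automatic; no separate argument is needed beyond noting that $TH_k(I) \subseteq \R^n$ trivially. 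Thus the proof is genuinely short, with all the substance carried by Proposition \ref{prop:basic}(2) turning strict positivity into strong nonnegativity.
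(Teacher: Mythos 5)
Your proof is correct and is essentially the paper's argument: in both, the substance is that adding a positive constant to a linear function nonnegative on $V_{\R}(I)$ makes it strictly positive, hence strongly nonnegative by Proposition \ref{prop:basic}(2), hence $k$-sos modulo $I$ by the weak $(1,k)$-sos hypothesis, which then excludes the relevant points from $TH_k(I)$ by its definition. The only difference is bookkeeping: the paper fixes a point $P \notin \overline{\conv(V_{\R}(I))}$, separates it, and shifts the separating functional by $-f(P)/2$, whereas you quantify over all halfspaces $\{\ell \geq 0\}$ cutting out $\overline{\conv(V_{\R}(I))}$ and let $\eps \to 0$ --- a dual phrasing of the same separation argument.
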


\proof
Let $P \in \R^n$ such that $P \notin \overline{\conv(V_{\R}(I))}$.  By the Separation Theorem, there is a linear polynomial $f$ such that $f$ is nonnegative on $\conv(V_{\R}(I))$ and $f(P) < 0$.  Consider the linear function $g = f - \frac{f(P)}{2}$.  We have $g(P) < 0$, and $g$ is \emph{positive} on $\overline{\conv(V_{\R}(I))}$ and hence positive on $V_{\R}(I)$.  This implies $g$ is strongly nonnegative by Proposition \ref{prop:basic} (2), and $g$ is then a sum of squares of polynomials of degree at most $k$ by hypothesis. Since $P$ was arbitrary outside
$\overline{\conv(V_{\R}(I))}$, the result follows.
\qed

The above proposition along with Theorem \ref{thm:GPT} shows that for real radical ideals, being weakly $(1,k)$-sos is in fact equivalent to being $TH_k$-exact.

\begin{corollary}\label{cor:thetaequiv}
If $I \subseteq \rx$ is a real radical ideal, then the following are equivalent:
\begin{enumerate}
\item $I$ is weakly $(1,k)$-sos
\item $I$ is $(1,k)$-sos
\item $I$ is $TH_k$-exact.
\end{enumerate}
\end{corollary}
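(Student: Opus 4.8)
The plan is to establish a cycle of implications among the three conditions, drawing almost entirely on results already proved. First I would observe that (2) $\Rightarrow$ (1) is immediate from the definitions: every strongly nonnegative function of degree at most $1$ is in particular nonnegative on $V_{\R}(I)$ by Proposition \ref{prop:basic} (1), so the hypothesis that $I$ is $(1,k)$-sos — which forces all nonnegative linear functions to be $k$-sos — a fortiori forces all strongly nonnegative linear functions to be $k$-sos. Hence $I$ is weakly $(1,k)$-sos. (This is the easy, purely formal direction, and it does not require real radicality.)

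Next I would invoke the two substantive inputs. Proposition \ref{prop:strictsos} gives (1) $\Rightarrow$ (3) with no hypothesis on $I$ at all: weak $(1,k)$-sos-ness implies $TH_k$-exactness. Then Theorem \ref{thm:GPT}, which is exactly where the real radical hypothesis enters, supplies the converse (3) $\Rightarrow$ (2): for a real radical ideal, $TH_k$-exactness is equivalent to being $(1,k)$-sos. Chaining (2) $\Rightarrow$ (1) $\Rightarrow$ (3) $\Rightarrow$ (2) closes the loop and yields the equivalence of all three.

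There is essentially no obstacle here: the corollary is a bookkeeping assembly of Proposition \ref{prop:strictsos} and Theorem \ref{thm:GPT}, with the trivial containment of sos-classes filling the one remaining gap. If I wanted to be slightly more careful I would just double-check that the degree bounds match up cleanly — the notion of "$1$-sos" in the theta-body setting means the $g_i$ are linear, and the weak $(1,k)$-sos hypothesis concerns strongly nonnegative functions \emph{of degree at most $1$} expressed as sums of squares of polynomials of degree at most $k$, so the quantifiers in all three statements are compatible. No new estimate or construction is needed; the proof is a one-line citation chain, which I would write out as: (2) trivially implies (1) since strongly nonnegative functions are nonnegative; (1) implies (3) by Proposition \ref{prop:strictsos}; and (3) implies (2) by Theorem \ref{thm:GPT}, using that $I$ is real radical.
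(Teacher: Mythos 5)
Your proposal is correct and follows essentially the same route as the paper: the cycle $(1) \Rightarrow (3)$ by Proposition~\ref{prop:strictsos}, $(3) \Rightarrow (2)$ by Theorem~\ref{thm:GPT} (where real radicality is used), and $(2) \Rightarrow (1)$ from Proposition~\ref{prop:basic}~(1). No gaps; your extra remark about degree bookkeeping is harmless but not needed.
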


\begin{proof}
Proposition~\ref{prop:strictsos} establishes that $(1) \Longrightarrow (3)$.  $(3) \Longrightarrow (2)$ follows from Theorem~\ref{thm:GPT}.  Finally, $(2) \Longrightarrow (1)$ is immediate from Proposition~\ref{prop:basic} (1).
\end{proof}

We conclude by briefly discussing some further questions.
Our original hope was that replacing $(1,k)$-sos with weakly $(1,k)$-sos
would allow the relaxation of the radical portion of the real radical
condition in Theorem \ref{thm:GPT}. We have not yet obtained any results
in this direction, but neither do we have any counterexamples. Indeed,
we are not aware of any examples of an ideal $I$ which is $TH_k$-exact
but not weakly $(1,k)$-sos. It seems implausible that the two conditions
should be equivalent without any sort of hypothesis implying at least
that $V_{\R}(I)$ is Zariski dense in $V_{\C}(I)$, but neither is it entirely
absurd: we note that if $V_{\C}(I)$ is irreducible and $V_{\R}(I)$ is
not Zariski dense, then 
we will have that $V_{\R}(I)$ is contained in the singular locus of $V(I)$.
In addition, if $V_{\R}(I)=\emptyset$, then according to the 
Positivstellensatz (2.2.1 of \cite{Marshall}) we have $-1$ a sum of squares
modulo $I$, which then implies that every polynomial is a sum of squares
modulo $I$.

It would also be interesting to consider effectiveness questions. Of
course, the concept of strong nonnegativity is already useful from an
effectiveness point of view insofar as it provides a new approach to
producing a certificate that a given function is not a sum of squares
modulo an ideal. However, it is also natural to wonder whether, for
instance, it is possible to effectively determine whether a given 
function is strongly nonnegative at a point. This question naturally
breaks up into two subquestions: whether strong nonnegativity at a
point can be effectively computed for a given order of infinitesimal
arcs, and whether for any given ideal $I$, point $P \in V_{\R}(I)$,
and function $f$, one can effectively compute a number $N$ such that to 
determine that $f$ is strongly nonnegative at $P$, it is enough to look
at infinitesimal arcs of order up to $N$. The latter question is 
interesting in and of itself, both from a theoretical point of view,
and because one could envision that even if the answer to the first 
question is negative in general, it could be positive in some more
specific scenarios.

\bibliographystyle{hamsplain}
\bibliography{references}

\end{document}